\tikzstyle{none}=[inner sep=0pt]
\definecolor{hexcolor0xf81e1c}{rgb}{0.973,0.118,0.110}
\definecolor{hexcolor0x3c00ff}{rgb}{0.235,0.000,1.000}
\definecolor{hexcolor0x24fe00}{rgb}{0.141,0.996,0.000}
\tikzstyle{whitevertex}=[circle,fill=White,draw=Black, scale = 0.5]
\tikzstyle{vertex}=[circle,fill=White,draw=Black, scale = 0.5]
\tikzstyle{redvertex}=[circle,fill=hexcolor0xf81e1c,draw=Black]
\tikzstyle{bluevertex}=[circle,fill=hexcolor0x3c00ff,draw=Black]
\tikzstyle{greenvertex}=[circle,fill=hexcolor0x24fe00,draw=Black]
\tikzstyle{textbox}=[rectangle,fill=none,draw=none]
\tikzstyle{arc}=[Black, ->]
\newtheorem{theorem}{Theorem}[section]
\newtheorem{corollary}[theorem]{Corollary}
\newtheorem{lemma}[theorem]{Lemma}
\newtheorem{proposition}[theorem]{Proposition}
\newtheorem{observation}[theorem]{Observation}
\begin{document}

\title{Switching $m$-edge-coloured graphs using non-Abelian groups}

\author{Chris Duffy, Gary MacGillivray, Ben Tremblay}
\date{}

\maketitle 

\begin{abstract}
Let $G$ be a graph whose edges are each assigned one of the $m$-colours $1, 2, \ldots, m$,
and let $\Gamma$ be a subgroup of $S_m$.
The operation of switching at a vertex $x$ with respect  $\pi \in \Gamma$ 
permutes the colours of the edges incident with $x$ according to $\pi$.  There is a well-developed theory of switching when $\Gamma$ is Abelian. Much less is known for non-Abelian groups.   In this paper we consider switching with
respect to non-Abelian groups including symmetric, alternating and dihedral groups.
We first consider the question of whether 
there is a sequence of switches using elements of $\Gamma$ that transforms
an $m$-edge-coloured graph $G$ to an $m$-edge coloured graph $H$.
Necessary and sufficient conditions
for the existence of such a sequence
are given for each of the groups being considered.
We then consider the question of whether an
$m$-edge coloured graph can be switched using elements of $\Gamma$ so that the
transformed $m$-edge coloured graph has a vertex $k$-colouring, or a homomorphism to a fixed
$m$-edge coloured graph $H$.
For the  groups just mentioned we  establish
dichotomy theorems for the complexity of
these decision problems.
These are the first dichotomy theorems to be established for colouring or homomorphism problems and switching with 
respect to any group other than $S_2$.
\end{abstract}

\section{Introduction and Definitions}
An $m$-edge-coloured graph is an ordered pair $G = (H, \Sigma)$, 
where $H$ is a graph and $\Sigma: E(H) \to \{1, 2, \ldots, m\}$ is its \emph{signature}.
The graph $H$ is the \emph{underlying graph} of $G$, and may also be referred to as 
\emph{underlying$(G)$}.  
The vertices of $G$ are the vertices of $H$.
The edges of $G$ are coloured edges of $H$, that is, edges $e \in E(H)$ together
with their signature (or \emph{colour}) $\Sigma(e)$.
We use $E_i(G)$ to denote the set of edges of $G$ with colour $i$.
An $m$-edge-coloured graph is \emph{monochromatic of colour $j$} if all of its edges have colour $j$.

Let $G$ be an $m$-edge-coloured graph and let $\Gamma$ be a subgroup of $S_m$.
For $x \in V$ and $\pi \in \Gamma$, the operation of
\emph{switching at $x$ with respect to $\pi$} transforms $G$ into the 
$m$-edge-coloured graph $G^{(x, \pi)}$ that has the same underlying graph as $G$ and with
the colours of the edges incident with $x$ permuted according to $\pi$, that is,
if $\Sigma(G)(xy) = i$, then $\Sigma(G^{(x, \pi)})(xy) = \pi(i)$.

Let $\mathcal{S} = (x_1, \pi_1), (x_2, \pi_2), \ldots, (x_t, \pi_t)$ be a sequence of 
elements of $V(G) \times \Gamma$. 
Recursively define
$$G^\mathcal{S} 
= G^{(x_1, \pi_1), (x_2, \pi_2), \ldots, (x_t, \pi_t)}
= \left(G^{(x_1, \pi_1)}\right)^{(x_2, \pi_2), \ldots, (x_t, \pi_t)}.$$
We call the sequence $\mathcal{S}$ a \emph{$\Gamma$-switching sequence},
and say that it \emph{transforms} $G$ into $G^{\mathcal{S}}$.
Two $m$-edge-coloured graphs $G$ and $H$ are called \emph{$\Gamma$-switch equivalent} if there exists a $\Gamma$-switching sequence $\mathcal{S}$ such that
$G^\mathcal{S} \cong H$.
In other words, $G$ and $H$ are $\Gamma$-switch equivalent if there exists a $\Gamma$-switching sequence that transforms $G$ into an $m$-edge coloured graph that is isomorphic to $H$. 
It is easy to see that $\Gamma$-switch equivalence defines an equivalence relation on the set of all $m$-edge
coloured graphs.  The equivalence class of the $m$-edge-coloured graph $G$ is denoted by $[G]_\Gamma$.

Switching 2-edge coloured graphs with respect to $S_2$ first
appears in the work of Abelson and Rosenberg in the context of 
behavioural science \cite{AR}.  Switching 2-edge coloured graphs in which the colours are $\{+1, -1\}$ is integral to the study of signed graphs.  These are different than 2-edge coloured graphs because the product of colours on each cycle in invariant under switching, which leads to the fundamental concept of \emph{balance} of a cycle.  Signed graphs have been extensively studied by Zaslavsky; for example see \cite{Zaslavsky, ZaslavskySurvey}. 
The related concept of \emph{pushing vertices} in oriented graphs is considered in \cite{KlosMacG}.
Switching $m$-edge coloured graphs with
respect to cyclic groups was first studied by Brewster and Graves \cite{BrewsterGraves}.  Their results are extended to all Abelian groups in \cite{LMW}.

After noting some preliminary information, in the first part of this paper we consider 
the question of when two $m$-edge-coloured graphs $G$ and $H$
are $\Gamma$-switch equivalent when $\Gamma$ is a symmetric, 
alternating or dihedral group, or belongs to a family of other groups.  
In each case we give necessary and sufficient conditions
for two $m$-edge coloured graphs $G$ and $H$ to be
$\Gamma$-switch equivalent.
We believe these to be the first results on switch equivalence
with respect to non-Abelian groups.
For an Abelian group $\Gamma$ and an $m$-edge coloured graph $G$
there is a graph $P_\Gamma(G)$ such that $G$ and $H$ are switch equivalent if and only if $P_\Gamma(G) \cong P_\Gamma(H)$ 
\cite{LMW} (also see \cite{BrewsterGraves,  SenThesis}, and \cite{KlosMacG} for similar results in the context of oriented graphs), if and only if $H$ is a special type of subgraph of $P_\Gamma(G)$
\cite{LMW} (the result is implicit in \cite{BrewsterGraves}).
No similar results are known to hold when $\Gamma$ is non-Abelian.
For switching with respect to $S_2$, another different necessary and sufficient condition has been given by Zaslavsky (see Corollary \ref{ZaslavskyTheorem}, where a similar condition is shown to hold for $D_m$, $m$ even).

In the last part of the paper we consider colourings and 
homomorphisms of $m$-edge coloured graphs. 
We are interested in the complexity of deciding whether a given
$m$-edge coloured graph $G$ can be switched so it has
a vertex $k$-colouring or a homomorphism to a fixed
$m$-edge coloured graph $H$.
We are able to give dichotomy theorems for these problems with respect to the groups we consider.
A dichotomy theorem for $\Gamma$-switchable $k$-colouring 
when $\Gamma$ is Abelian appears in $\cite{LMW}$.
Kidner has proved that for all groups $\Gamma$ the
problem of deciding whether an $m$-edge coloured graph $G$ has a $\Gamma$-switchable $k$-colouring 
is solvable in polynomial time when $k \leq 2$
and is NP-hard when $k \geq 3$ \cite{Kidner} (also see
\cite{BKM, LMW}).
The dichotomy theorems for the homomorphism problem generalize
the fundamental result of Hell and Ne\v{s}et\v{r}il, and the dichotomy theorem for $S_2$ switchable homomorphism due to
Brewster et al. \cite{BFHN}.
Related results for oriented graphs appear in \cite{KlosMacG}.


We note that, for any group $\Gamma$,  two $m$-edge-coloured graphs which are both monochromatic of colour $j$ are $\Gamma$-switch equivalent
if and only if their underlying graphs are isomorphic.
Hence deciding whether two $m$-edge coloured graphs are
switch equivalent is at least as hard as deciding whether 
they are isomorphic.

We now describe a way to determine whether two $m$-edge coloured 
graphs $G$ and $H$ are
$\Gamma$-switch equivalent with respect to any 
group $\Gamma$.  One can
construct an auxiliary graph with vertex set equal to the set of
all $\frac{m!}{|Aut(G)|}|E(G)|^m$ \emph{labelled} $m$-edge coloured graphs, and  
an edge from $F$ to $F'$ if there exists a vertex $x$ of $F$ and a permutation $\pi \in \Gamma$ such that 
$F^{(x, \pi)} = F'$.  
Two (labelled) $m$-edge coloured graphs are $\Gamma$-switch equivalent if and only if they belong to the same component of the auxiliary graph.  
Determining whether $G$ and $H$ are $\Gamma$-switch equivalent
using this procedure involves considering $\Gamma$-switching
sequences of length at most $\frac{m!}{|Aut(G)|}|E(G)|^m$.

Suppose $\Gamma$ is Abelian.
Then the same transformed
graph arises from any rearrangement of a given
$\Gamma$-switching sequence.
Since there is a rearrangement
so that the switches at each vertex occur consecutively, and the result of 
switching with respect to $(x, \alpha_1), (x, \alpha_2), \ldots, (x, \alpha_k)$ is the same as the result of switching with respect to
$(x, \alpha_1 \alpha_2 \ldots \alpha_k)$,
it suffices to consider $\Gamma$-switching sequences in which there is at most one switch at each vertex.
Such a sequence has length at most $|V(G)|$.

To see that the order of switches can matter when $\Gamma$ 
is non-Abelian, consider a 3-edge-coloured graph $G$, the group $\Gamma = S_3$, and an edge $xy$ of colour 1.
Let $\alpha = (1\ 2)$ and $\beta = (2\ 3)$.
For the switching sequence 
$\mathcal{S} = (x, \alpha)(y, \beta) (x, \alpha^{-1})$ we have 
$\Sigma(G^{\mathcal{S}})(xy) = 3$, whereas for the
switching sequence 
$\mathcal{S'} = (x, \alpha)(x, \alpha^{-1}) (y, \beta)$
we have $\Sigma(G^{\mathcal{S'}})(xy) = 1$.

\section{Property $\mathcal{T}_j$}

In this section we introduce a group property which we call \emph{property $\mathcal{T}_j$}.
If the group $\Gamma$ has property $\mathcal{T}_j$ then 
for any edge $xy$ in an $m$-edge coloured graph $G$ there exists
a $\Gamma$-switching sequence such that $xy$ is of 
colour $j$, and the colour of every other
edge of $G$ is unchanged.
It follows that $G$ can be transformed to be 
monochromatic of colour $j$ by changing the
colour of one edge at a time.

We motivate the definition of  property $\mathcal{T}_j$ by considering switching with respect to $S_m$, $m \geq 3$.

\begin{proposition}
Let $G$ be a $m$-edge-coloured graph, where $m \geq 3$, and let $i, j \in \{1, 2, \ldots, m\}$ be such that $i \neq j$.
If $xy \in E_i(G)$, 
then there exists $G^\prime \in [G]_{S_m}$
such that $xy \in E_j(G)$ and $G - xy = G^\prime - xy$.
\label{RecolourEdgeS_m}
\end{proposition}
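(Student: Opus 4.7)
The plan is to exhibit an explicit short $S_m$-switching sequence whose net effect changes the colour of $xy$ from $i$ to $j$ while restoring every other edge to its original colour. Edges not incident with $x$ or $y$ are automatically untouched by any switch at $x$ or $y$, so the task reduces to arranging the switches at the two endpoints of $xy$ so that their composite action is the identity on every edge incident with $x$ or $y$ except $xy$, but a permutation sending $i$ to $j$ on $xy$ itself. Since $S_m$ is non-Abelian for $m \geq 3$, the natural device is a commutator-type sequence whose two ``inner'' and two ``outer'' switches almost cancel, failing to do so only on the one edge that sees all four.

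Concretely, I would fix a third colour $k \in \{1,\ldots,m\} \setminus \{i,j\}$, which exists because $m \geq 3$, and set $\pi_1 = (i\ k)$ and $\pi_2 = (i\ j)$ in $S_m$. I then take the switching sequence
\[
\mathcal{S} = (x,\pi_1),\ (y,\pi_2),\ (x,\pi_1^{-1}),\ (y,\pi_2^{-1}),
\]
and define $G' = G^{\mathcal{S}}$.

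The verification splits into three cases. Any edge $xz$ with $z \neq y$ is acted on only by the two switches at $x$, whose composite effect on colours is $\pi_1^{-1}\pi_1 = \mathrm{id}$; similarly any edge $yw$ with $w \neq x$ sees only the two switches at $y$, contributing $\pi_2^{-1}\pi_2 = \mathrm{id}$; and any edge not meeting $\{x,y\}$ is untouched. Hence $G - xy = G' - xy$. The edge $xy$ itself is incident with both endpoints, so all four permutations act in turn, leaving it with colour $\pi_2^{-1}\pi_1^{-1}\pi_2\pi_1(i)$. A one-line computation gives $\pi_1(i)=k$, then $\pi_2(k)=k$ (since $\pi_2$ fixes $k$), then $\pi_1^{-1}(k)=i$, and finally $\pi_2^{-1}(i)=j$, so $xy \in E_j(G')$ as required.

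There is no serious obstacle here; the only conceptual point — and the reason the hypothesis $m \geq 3$ is needed — is that the auxiliary colour $k$ is exactly what prevents the commutator $\pi_2^{-1}\pi_1^{-1}\pi_2\pi_1$ from fixing $i$, which in turn is what allows a purely local, single-edge recolouring via $S_m$-switches.
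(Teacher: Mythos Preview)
Your proof is correct and takes essentially the same approach as the paper: both use a four-term commutator-type switching sequence at $x$ and $y$ built from two transpositions that involve an auxiliary colour $k \neq i,j$, so that the switches at each endpoint cancel on all other edges while their non-commutativity recolours $xy$. The only cosmetic difference is the choice of transpositions --- the paper uses $\alpha = (i\ j)$ at $x$ and $\beta = (j\ k)$ at $y$, whereas you use $(i\ k)$ at $x$ and $(i\ j)$ at $y$ --- but the mechanism and verification are identical.
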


\begin{proof}
Since $m \geq 3$, 
 for any $k \in \{1, 2, \ldots, m\}$ there exists
 a transposition $(i\ j)$ with $i, j \in \{1, 2, \ldots, m\}\setminus\{k\}$.
 Let $\alpha = (i\ j)$ and $\beta = (j\ k)$.
Consider the $S_m$-switching sequence $(x, \alpha),$ $(y, \beta), (x, \alpha), $ $(y, \beta)$.
This transforms $G$ into $G^\prime$.

The only edges which change colour in the transformation are incident with $x$ or $y$.  
It is given that the edge $xy$ has colour $i$ in $G$.
After the first, second, third and fourth switch, the edge $xy$ has colour $j, k, k, j$, respectively, in the transformed graph.
Any edge $e$ incident with $x$ and not $y$ changes from its colour, $c_e$, to $\alpha(c_e)$ and then back to $c_e$.
Similarly, any edge incident with $y$ has the same colour as in $G$ after switching.
The result follows.
\end{proof}

\begin{corollary}
For $m \geq 3$, two $m$-edge-coloured graphs $G$ and $G'$ are $S_m$-switch equivalent if and only if $\mathit{underlying}(G) \cong \mathit{underlying}(G')$.
\label{S_mSwitchEquiv}
\end{corollary}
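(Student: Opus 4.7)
The plan is to prove the two directions of the biconditional separately, relying on Proposition \ref{RecolourEdgeS_m} for the harder direction.

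For the forward direction, I would argue that a single switch $(x, \pi)$ only permutes the colours of the edges incident to $x$ and leaves the edge set of the underlying graph entirely unchanged. Consequently, for any $S_m$-switching sequence $\mathcal{S}$ we have $\mathit{underlying}(G) = \mathit{underlying}(G^\mathcal{S})$. If $G^\mathcal{S} \cong G'$ as $m$-edge-coloured graphs, then in particular there is an isomorphism between their underlying graphs, which gives $\mathit{underlying}(G) \cong \mathit{underlying}(G')$.

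For the backward direction, the idea is to use Proposition \ref{RecolourEdgeS_m} as a tool for changing one edge at a time to a fixed reference colour, say colour $1$. Enumerate the edges of $G$ as $e_1, e_2, \ldots, e_q$. Applying Proposition \ref{RecolourEdgeS_m} to $e_1$ yields a graph in $[G]_{S_m}$ in which $e_1$ has colour $1$ and every other edge has its original colour. Iterating over $e_2, \ldots, e_q$, each application recolours a single edge to $1$ while preserving the colours of all other edges (in particular, those already switched to colour $1$). After $q$ applications, we obtain a graph $G^*$ that is $S_m$-switch equivalent to $G$ and is monochromatic of colour $1$ on $\mathit{underlying}(G)$. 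Applying the same procedure to $G'$ yields a graph $(G')^*$ that is $S_m$-switch equivalent to $G'$ and is monochromatic of colour $1$ on $\mathit{underlying}(G')$.

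Since $\mathit{underlying}(G) \cong \mathit{underlying}(G')$, and $G^*$ and $(G')^*$ are both monochromatic of colour $1$ on their respective underlying graphs, they are isomorphic as $m$-edge-coloured graphs. Transitivity of $S_m$-switch equivalence (noted in the introduction) then gives $G \sim_{S_m} G^* \cong (G')^* \sim_{S_m} G'$, so $G$ and $G'$ are $S_m$-switch equivalent. There is no substantive obstacle here; the only thing to be careful about is verifying that successive applications of Proposition \ref{RecolourEdgeS_m} genuinely do not disturb edges that have already been recoloured, which is immediate from the statement of that proposition.
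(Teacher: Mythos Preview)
Your proof is correct and matches the paper's approach. The paper states this corollary without proof (as an immediate consequence of Proposition~\ref{RecolourEdgeS_m}), but the argument it later gives for the more general Corollary~\ref{S_mSwitchEquiv} (for groups with property~$\mathcal{T}_j$) is exactly yours: reduce both graphs to a monochromatic colouring of a fixed colour by repeated single-edge recolourings, then invoke transitivity of switch equivalence. One trivial nitpick: Proposition~\ref{RecolourEdgeS_m} is stated for $i \neq j$, so edges already of colour~$1$ should simply be skipped in your iteration.
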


Let $\Gamma$ be a subgroup of $S_m$.
For $i, j \in \{1, 2 \ldots, m\}$,
we say $\Gamma$ has \emph{property $\mathcal{T}_{i,j}$} 
if there exist  permutations $\alpha, \beta \in \Gamma$ such
that $\alpha$ maps $i$ to $j$ and fixes some element $k$, and
$\beta$ maps $j$ to $k$.

\begin{proposition}
Let $\Gamma$ be a subgroup of $S_m$ with property $\mathcal{T}_{i,j}$.
If $G$ is a $m$-edge-coloured graph, where $m \geq 3$,
and $xy \in E_i(G)$, 
then there exists $G^\prime \in [G]_{\Gamma}$
such that $xy \in E_j(G)$ and $G - xy = G^\prime - xy$.
\label{RecolourEdgeGamma}
\end{proposition}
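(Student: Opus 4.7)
The plan is to mimic the proof of Proposition \ref{RecolourEdgeS_m}, but to use inverses where that proof used involutions, since property $\mathcal{T}_{i,j}$ gives us permutations $\alpha,\beta\in\Gamma$ that need not have order $2$. Apply the definition of $\mathcal{T}_{i,j}$ to fix such an $\alpha$ (mapping $i$ to $j$ and fixing some $k$) and $\beta$ (mapping $j$ to $k$). Since $\Gamma$ is a group, $\alpha^{-1}$ and $\beta^{-1}$ also lie in $\Gamma$, so I can consider the $\Gamma$-switching sequence
\[
\mathcal{S} = (x,\alpha),\ (y,\beta),\ (x,\alpha^{-1}),\ (y,\beta^{-1}),
\]
and let $G' = G^{\mathcal{S}}$.

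The first step is to track the colour of $xy$. It starts as $i$; after $(x,\alpha)$ it is $\alpha(i)=j$; after $(y,\beta)$ it is $\beta(j)=k$; after $(x,\alpha^{-1})$ it is $\alpha^{-1}(k)=k$ (because $\alpha$, hence $\alpha^{-1}$, fixes $k$); and after $(y,\beta^{-1})$ it is $\beta^{-1}(k)=j$ (because $\beta(j)=k$). So $xy\in E_j(G')$, as required.

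The second step is to verify that every other edge retains its original colour. An edge incident with $x$ but not $y$ is affected only by the two switches at $x$, whose combined effect is $\alpha^{-1}\alpha = \mathrm{id}$; similarly an edge incident with $y$ but not $x$ is affected only by the two switches at $y$, whose combined effect is $\beta^{-1}\beta = \mathrm{id}$; and an edge incident with neither $x$ nor $y$ is untouched by $\mathcal{S}$. Hence $G - xy = G' - xy$.

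The only subtlety, and the reason the statement uses property $\mathcal{T}_{i,j}$ rather than merely assuming some $\alpha\in\Gamma$ sends $i$ to $j$, is precisely the second step: for the recolouring to have no side-effect on other edges incident with $x$ and $y$, the temporary colour $k$ acquired by $xy$ after the switches at $x$ and $y$ must be unmoved by the ``undo'' switch at $x$. This is exactly guaranteed by the hypothesis that $\alpha$ fixes some $k$ and that $\beta$ sends $j$ to that same $k$. So no real obstacle arises; the content of the proof is simply identifying why the definition of $\mathcal{T}_{i,j}$ is the right abstraction of the structural properties of $(i\ j)$ and $(j\ k)$ used in Proposition \ref{RecolourEdgeS_m}.
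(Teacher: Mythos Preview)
Your proof is correct and uses exactly the same switching sequence $(x,\alpha),(y,\beta),(x,\alpha^{-1}),(y,\beta^{-1})$ as the paper; the paper simply states this sequence in one line without spelling out the colour-tracking and side-effect verification that you have (correctly) provided.
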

\begin{proof}
The switching sequence $(x, \alpha), (y, \beta), (x, \alpha^{-1}), (y, \beta^{-1})$ transforms $G$ to $G'$.
\end{proof}

If there exists $j \in \{1, 2, \ldots, m\}$ such that $\Gamma$ has Property $T_{i,j}$ for all $i \in \{1, 2, \ldots, m\}$, then we say it has
\emph{Property $\mathcal{T}$}. 

\begin{corollary}
Let $\Gamma$ be a subgroup of $S_m$ with property $\mathcal{T}_j$.
Then for any $m$-edge coloured graph $G$ and any colour $j$ there exists
a switching sequence $\mathcal{S}$ such that $G^\mathcal{S}$
is monochromatic of colour $j$.
\label{Monoj}
\end{corollary}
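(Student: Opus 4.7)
The plan is to use Proposition \ref{RecolourEdgeGamma} as an atomic operation and apply it repeatedly, one edge at a time, until every edge has colour $j$. By the definition of Property $\mathcal{T}$ (which I read as the corollary's ``$\mathcal{T}_j$''), there is a distinguished colour $j$ such that $\Gamma$ has property $\mathcal{T}_{i,j}$ for every $i \in \{1,2,\ldots,m\}$, so Proposition \ref{RecolourEdgeGamma} is applicable to any edge of any colour, moving it to colour $j$.

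More precisely, I would enumerate the edges of $G$ whose colour differs from $j$ as $e_1, e_2, \ldots, e_t$. I would then proceed by induction on $\ell$, maintaining the invariant that after the $\ell$-th stage, the edges $e_1, \ldots, e_\ell$ all have colour $j$ and the colours of all remaining edges are unchanged from $G$. The base case $\ell = 0$ is trivial. For the inductive step, suppose the invariant holds after stage $\ell$, and let $e_{\ell+1} = xy$ have colour $i \ne j$. Since $\Gamma$ has property $\mathcal{T}_{i,j}$, Proposition \ref{RecolourEdgeGamma} yields a $\Gamma$-switching sequence $\mathcal{S}_{\ell+1}$ that changes the colour of $xy$ from $i$ to $j$ while leaving the colour of every other edge unchanged. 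Appending $\mathcal{S}_{\ell+1}$ to the switching sequence accumulated so far preserves the invariant for $\ell + 1$.

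After stage $t$, every edge of $G$ has colour $j$, so the concatenation $\mathcal{S} = \mathcal{S}_1 \mathcal{S}_2 \cdots \mathcal{S}_t$ is a $\Gamma$-switching sequence that transforms $G$ into an $m$-edge-coloured graph that is monochromatic of colour $j$, as required. There is no real obstacle: the only subtlety is being explicit that switching sequences concatenate to switching sequences (immediate from the recursive definition of $G^{\mathcal{S}}$) and that the ``leaves other edges unchanged'' clause of Proposition \ref{RecolourEdgeGamma} is exactly what makes the induction go through without interference between stages.
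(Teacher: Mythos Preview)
Your proof is correct and is exactly the argument the paper intends: the corollary is stated without proof in the paper, but the surrounding text makes clear that one simply applies Proposition~\ref{RecolourEdgeGamma} one edge at a time, precisely as you do. Your induction just makes this explicit.
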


\begin{corollary}
Let $\Gamma$ be a subgroup of $S_m$
with property $\mathcal{T}_j$.
Then two $m$-edge-coloured graphs $G_1$ and $G_2$ are $\Gamma$-switch equivalent if and only if $\mathit{underlying}(G_1) \cong \mathit{underlying}(G_2)$.
\label{S_mSwitchEquiv}
\end{corollary}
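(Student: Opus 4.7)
The plan is to prove both directions, using the previous corollary (Monoj) to reduce the backward direction to the monochromatic case.

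For the forward direction, I would simply note that the operation of switching at a vertex $x$ with respect to $\pi$ only permutes the colours of edges incident with $x$; the underlying graph is never altered. Hence if $\mathcal{S}$ is any $\Gamma$-switching sequence, then $\mathit{underlying}(G_1^{\mathcal{S}}) = \mathit{underlying}(G_1)$. If additionally $G_1^{\mathcal{S}} \cong G_2$, then an isomorphism of $m$-edge-coloured graphs is in particular an isomorphism of their underlying graphs, so $\mathit{underlying}(G_1) \cong \mathit{underlying}(G_2)$.

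For the backward direction, suppose $\mathit{underlying}(G_1) \cong \mathit{underlying}(G_2)$. By Corollary \ref{Monoj}, there exist $\Gamma$-switching sequences $\mathcal{S}_1$ and $\mathcal{S}_2$ such that $G_1^{\mathcal{S}_1}$ and $G_2^{\mathcal{S}_2}$ are each monochromatic of colour $j$. Since switching does not change the underlying graph, $\mathit{underlying}(G_1^{\mathcal{S}_1}) = \mathit{underlying}(G_1) \cong \mathit{underlying}(G_2) = \mathit{underlying}(G_2^{\mathcal{S}_2})$. Two monochromatic $m$-edge-coloured graphs of the same colour whose underlying graphs are isomorphic are themselves isomorphic as $m$-edge-coloured graphs, so $G_1^{\mathcal{S}_1} \cong G_2^{\mathcal{S}_2}$. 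Thus $G_1$ is $\Gamma$-switch equivalent to $G_2^{\mathcal{S}_2}$, and since $G_2$ is $\Gamma$-switch equivalent to $G_2^{\mathcal{S}_2}$ (via $\mathcal{S}_2$), transitivity of $\Gamma$-switch equivalence (noted in the introduction) gives that $G_1$ and $G_2$ are $\Gamma$-switch equivalent.

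There is no real obstacle here: property $\mathcal{T}_j$ does all the work through Corollary \ref{Monoj}, and the rest is a standard reduction via a canonical form (the monochromatic graph of colour $j$). The only care needed is to appeal explicitly to the fact that switching preserves the underlying graph and that $\Gamma$-switch equivalence is an equivalence relation, both of which are recorded earlier in the paper.
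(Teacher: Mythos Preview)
Your proof is correct and follows essentially the same approach as the paper: use Corollary~\ref{Monoj} to switch each $G_i$ to a monochromatic graph of colour $j$, observe these are isomorphic because their underlying graphs are, and conclude by transitivity of $\Gamma$-switch equivalence. Your write-up is simply a bit more explicit about why monochromatic graphs with isomorphic underlying graphs are isomorphic and why switching preserves the underlying graph, details the paper leaves implicit.
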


\begin{proof}
By definition, two $m$-edge coloured graphs which are
$\Gamma$-switch equivalent 
have isomorphic underlying graphs.

Now suppose $\mathit{underlying}(G_1) \cong \mathit{underlying}(G_2)$.
By Corollary \ref{Monoj}, both $G_1$ and $G_2$ are $\Gamma$-switch equivalent to an $m$-edge-coloured graph which is monochromatic
of colour $j$. Since $\Gamma$-switch equivalence is an equivalence 
relation, $G_1$ and $G_2$ are $\Gamma$-switch equivalent.
\end{proof}

It is easy to see that when $m \geq 4$ the alternating 
group $A_m$ has property $\mathcal{T}j$.

\begin{corollary}
For $m \geq 4$, two $m$-edge-coloured graphs $G$ and $G'$ are $A_m$-switch equivalent if and only if $\mathit{underlying}(G) \cong \mathit{underlying}(G')$.
\end{corollary}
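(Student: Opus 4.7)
The plan is to reduce the statement to the immediately preceding corollary (the one for groups with property $\mathcal{T}_j$) by verifying that $A_m$ has property $\mathcal{T}_j$ for some $j$ whenever $m \geq 4$. Once this is established, the corollary applies verbatim, so no further switching construction is required here.

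I would fix $j = 1$ (any choice works by the symmetry of $A_m$ on $\{1,2,\ldots,m\}$) and, for each $i \in \{1,2,\ldots,m\}$, exhibit even permutations $\alpha, \beta \in A_m$ witnessing property $\mathcal{T}_{i,j}$. When $i = j$ the identity already sends $j$ to $j$ and fixes everything, so I can take $\alpha = \mathrm{id}$, pick any $k \neq j$, and let $\beta$ be a 3-cycle of the form $(j\ k\ \ell)$ with $\ell \notin \{j,k\}$; such an $\ell$ exists since $m \geq 3$, and $\beta$ is even as a 3-cycle.

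For $i \neq j$ I would take $\alpha$ to be the 3-cycle $(i\ j\ \ell)$ for some $\ell \notin \{i,j\}$. This permutation is even, sends $i$ to $j$, and fixes every element outside $\{i,j,\ell\}$. Since $m \geq 4$, I can then choose $k \notin \{i,j,\ell\}$ to be a fixed point of $\alpha$, and take $\beta$ to be any 3-cycle $(j\ k\ \ell')$ with $\ell' \notin \{j,k\}$. Then $\beta$ lies in $A_m$ and maps $j$ to $k$, as required.

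The only delicate point is the cardinality bookkeeping in the case $i \neq j$: one needs $\ell$ distinct from $\{i,j\}$ and then $k$ distinct from $\{i,j,\ell\}$, which is exactly where the hypothesis $m \geq 4$ enters. With property $\mathcal{T}_j$ confirmed for $A_m$, the preceding corollary for groups with property $\mathcal{T}_j$ yields $A_m$-switch equivalence of any two $m$-edge-coloured graphs having isomorphic underlying graphs, and the converse direction is immediate from the definition of switching.
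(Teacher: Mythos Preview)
Your proposal is correct and follows exactly the approach of the paper: the paper simply remarks that $A_m$ has property $\mathcal{T}_j$ for $m\geq 4$ and invokes Corollary~\ref{S_mSwitchEquiv}, while you supply the explicit $3$-cycles witnessing property $\mathcal{T}_{i,1}$ for each $i$. Your cardinality bookkeeping pinpointing where $m\geq 4$ is needed is precisely the ``easy to see'' step the paper leaves to the reader.
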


\section{The dihedral group}

For $m \geq 3$ we denote by $D_m$ the group of
permutations of $\{1, 2, \ldots, m\}$
corresponding to symmetries of the regular $m$-gon 
with
vertices $1, 2, \ldots, m$ in cyclic order.
The cases $m$ odd and $m$ even are different. 
We consider the case of odd $m$ first.

\begin{proposition}
For any odd integer $m \geq 3$ and any $j \in \{1, 2, \ldots, m\}$, the group $D_m$ 
has property $\mathcal{T}_{j}$.
\end{proposition}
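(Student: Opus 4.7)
The plan is to verify the definition of property $\mathcal{T}_j$ directly, by exploiting the structure of $D_m$ when $m$ is odd: it has $m$ rotations (acting fixed-point-freely except the identity) and $m$ reflections, where for odd $m$ each reflection has a unique axis through one vertex of the $m$-gon and fixes exactly that vertex. Fix any target $j \in \{1, 2, \ldots, m\}$; for each $i$ we need $\alpha, \beta \in D_m$ with $\alpha(i) = j$, $\alpha$ fixing some $k$, and $\beta(j) = k$.

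First I would dispose of the trivial case $i = j$: take $\alpha$ to be the identity (which maps $i$ to $j$ and fixes everything), choose any $k \neq j$, and let $\beta$ be the rotation sending $j$ to $k$. This is available because $m \geq 3$ so the rotations act transitively on $\{1, \ldots, m\}$.

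For the main case $i \neq j$, the idea is to take $\alpha$ to be the reflection swapping $i$ and $j$. Writing the vertices in cyclic order $1, 2, \ldots, m$ and working mod $m$, the reflection through a vertex $v$ sends $v + s$ to $v - s$, so demanding that it swap $i$ and $j$ forces $2v \equiv i + j \pmod m$. Since $m$ is odd, $2$ is invertible mod $m$, so there is a unique such $v$, and the reflection $\alpha$ through vertex $v$ has exactly one fixed point, namely $k = v$. It remains to produce $\beta \in D_m$ with $\beta(j) = v$. I would verify first that $v \neq j$: if $v = j$, then $2j \equiv i + j \pmod m$ would give $i \equiv j$, contradicting $i \neq j$; so $v \neq j$ and the rotation $\beta$ of order dividing $m$ sending $j$ to $v$ lies in $D_m$ and does the job.

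The only real subtlety is the reliance on $m$ being odd to invert $2$ modulo $m$, which is exactly what guarantees the existence of a reflection exchanging any prescribed pair $i \neq j$; this is also what fails in the even case and motivates the separate treatment of $D_m$ for $m$ even later in the section. Once the two cases are combined, we conclude that $D_m$ has property $\mathcal{T}_{i,j}$ for every $i$, and hence property $\mathcal{T}_j$.
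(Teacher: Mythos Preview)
Your proof is correct and follows essentially the same route as the paper: for $i \neq j$ take $\alpha$ to be the reflection through the vertex $v$ with $2v \equiv i + j \pmod m$, which exists because $2$ is invertible modulo the odd integer $m$, and then obtain $\beta$ from the transitivity of the rotation subgroup. The paper packages the invertibility of $2$ as ``either $i-j$ or $j-i$ has even least residue'' and leaves the construction of $\beta$ implicit, but the underlying argument is the same.
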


\begin{proof}
Let $i, j \in \{1, 2, \ldots, m\}$.
We show $D_m$ has property $T_{i, j}$.
There is nothing to prove if $i = j$, so assume $i \neq j$.

Since $m$ is odd, either the least residue of $i - j$ modulo $m$ is even, or the least residue of $j - i$ modulo $m$ is even.
Without loss of generality, the latter holds.
Then there exists $k  \in \{1, 2, \ldots, m-1\}$ such that $j - i \equiv 2k\ (\mathrm{mod}\ m)$, so that $j - k \equiv k - i\ (\mathrm{mod}\ m)$.
Let $\alpha$ be the permutation of $\{1, 2, \ldots, m\}$ which corresponds to flipping the $m$-gon over while fixing vertex $k$.
Then $\alpha$ maps $i$ to $j$ and fixes $k$, so $D_m$ has property $\mathcal{T}_{i,j}$
This completes the proof.
\end{proof}

\begin{corollary}
For odd $m \geq 3$, two $m$-edge-coloured graphs $G$ and $G'$ are $D_m$-switch equivalent if and only if $\mathit{underlying}(G) \cong \mathit{underlying}(G')$.
\end{corollary}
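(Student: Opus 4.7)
The plan is to deduce this corollary immediately from the two preceding results. The preceding proposition shows that, for odd $m \geq 3$, the group $D_m$ has property $\mathcal{T}_{i,j}$ for every pair $i, j \in \{1, 2, \ldots, m\}$. In particular, fixing any colour $j$, property $\mathcal{T}_{i,j}$ holds for all $i$, which is exactly the definition of property $\mathcal{T}_j$ given just before Corollary \ref{Monoj}. Hence $D_m$ satisfies the hypothesis of Corollary \ref{S_mSwitchEquiv} (with $\Gamma = D_m$), and the claimed equivalence follows at once.

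If I were to unpack the argument rather than merely cite it, the forward direction is trivial since no switch alters the underlying graph, so any isomorphism witnessing $G^{\mathcal{S}} \cong G'$ restricts to an isomorphism of the underlying graphs. For the converse, I would apply Corollary \ref{Monoj} to switch $G$ and $G'$ separately to $m$-edge-coloured graphs that are monochromatic of some fixed colour $j$; the assumed isomorphism of underlying graphs then upgrades to an isomorphism of these monochromatic images as $m$-edge-coloured graphs, and transitivity of the $\Gamma$-switch-equivalence relation finishes the proof.

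There is essentially no obstacle remaining at this point: the real content was carried out in the preceding proposition, where the parity of $m$ was used to produce, for any $i \neq j$, a reflection of the regular $m$-gon that sends $i$ to $j$ while fixing the vertex $k$ on the corresponding axis. Everything in the corollary itself is a mechanical assembly of results already in hand.
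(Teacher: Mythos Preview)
Your proposal is correct and matches the paper's approach exactly: the corollary is stated without proof in the paper because it follows immediately from the preceding proposition (that $D_m$ has property $\mathcal{T}_j$ for odd $m\geq 3$) together with Corollary~\ref{S_mSwitchEquiv}. Your optional unpacking of the argument simply reproduces the proof of Corollary~\ref{S_mSwitchEquiv}, so nothing new is introduced.
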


We now consider the case of switching with respect to $D_m$ when $m$ is even.
The following basic facts from group theory will be used.

\begin{observation}
Suppose $m \geq 2$ is even.  Let
$\mathcal{E} = \{2, 4, \ldots, m\}$ and $\mathcal{O} = \{1, 3, \ldots, m-1\}$.
Then,
\begin{enumerate}
\item $\{ \mathcal{O}, \mathcal{E} \}$ is a block system 
for the action of $D_m$ on $\{1, 2, \ldots, m\}$.
\item $\mathit{Stabilizer}(\mathcal{E}) = \mathit{Stabilizer}(\mathcal{O})$ is a normal subgroup of $D_m$, 
\item $D_m / \mathit{Stabilizer}(\mathcal{E}) \cong D_m / \mathit{Stabilizer}(\mathcal{O}) \cong S_2$, and
\item $\mathit{Stabilizer}(\mathcal{E})$ has Property $\mathcal{T}_{i,j}$ for all $i,j \in \mathcal{E}$, and
\item  $\mathit{Stabilizer}(\mathcal{O})$ has Property $\mathcal{T}_{i,j}$ for all $i,j \in \mathcal{O}$.
\end{enumerate}
\label{GroupObs}
\end{observation}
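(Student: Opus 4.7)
The plan is to dispatch the five claims in order, since each rests on the previous. For (1), I would classify each element of $D_m$ by its effect on parity. Every element is either a rotation $r^k$ sending $i$ to $i+k \pmod{m}$, a vertex-axis reflection through opposite vertices $v, v+m/2$ (sending $i \mapsto 2v-i \pmod m$), or an edge-axis reflection through midpoints of opposite edges (sending $i \mapsto 2v+1-i \pmod m$). Because $m$ is even, reduction modulo $m$ preserves parity, so $r^k$ preserves parity iff $k$ is even, every vertex-axis reflection preserves parity, and every edge-axis reflection swaps it. Consequently every element of $D_m$ sends $\mathcal{E}$ to $\mathcal{E}$ or to $\mathcal{O}$ uniformly, which is (1).

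Claims (2) and (3) then follow quickly. The action of $D_m$ on the set $\{\mathcal{E},\mathcal{O}\}$ defines a homomorphism $\phi : D_m \to S_2$ whose kernel is exactly the setwise stabilizer of $\mathcal{E}$; this coincides with the setwise stabilizer of $\mathcal{O}$ since the two blocks partition $\{1,\ldots,m\}$, giving (2). The rotation $r$ swaps $\mathcal{E}$ and $\mathcal{O}$, so $\phi$ is surjective, and the First Isomorphism Theorem gives $D_m/\mathrm{Stabilizer}(\mathcal{E}) \cong S_2$, which is (3).

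For (4), given $i,j \in \mathcal{E}$ (with $i \neq j$, as otherwise $\alpha = \beta = \mathrm{id}$ suffices) I would take $\alpha$ to be a vertex-axis reflection $s_v$. Such an $s_v$ sends $i$ to $j$ exactly when $2v \equiv i+j \pmod m$, which is solvable because $i+j$ is even; by (1), $s_v \in \mathrm{Stabilizer}(\mathcal{E})$, and it fixes both $v$ and $v+m/2$. Since $\beta \in \mathrm{Stabilizer}(\mathcal{E})$ and $j \in \mathcal{E}$ force $k = \beta(j) \in \mathcal{E}$, I need one of $v, v+m/2$ to lie in $\mathcal{E}$; assuming this, $\beta$ may then be taken as the even rotation $r^{k-j}$, which lies in $\mathrm{Stabilizer}(\mathcal{E})$ and maps $j$ to $k$. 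Part (5) is handled symmetrically, with $\mathcal{E}$ and $\mathcal{O}$ interchanged throughout.

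The main obstacle is the parity bookkeeping in (4)--(5): verifying that among the two fixed points $v$ and $v+m/2$ of $s_v$ one always lies in the appropriate block. Since the parities of $v$ and $v+m/2$ agree iff $m/2$ is even, this reduces to a short case split on $m \pmod 4$ together with the parity of $(i+j)/2$, after which explicit selection of $v$ and $k$ completes the construction.
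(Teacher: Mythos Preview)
The paper gives no proof of this Observation; it is introduced as ``basic facts from group theory''. Your treatment of parts (1)--(3) is correct and is the natural argument.

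There is, however, a genuine gap in your plan for (4) and (5): the promised case split on $m\pmod 4$ cannot be completed, because the statement as written is \emph{false} when $m\equiv 0\pmod 4$. Take $m=8$, $\mathcal{E}=\{2,4,6,8\}$, $i=2$, $j=4$. The setwise stabilizer of $\mathcal{E}$ in $D_8$ is $\{e,r^2,r^4,r^6,s_1,s_2,s_3,s_4\}$. The only elements sending $2\mapsto 4$ are $r^2$ (fixed-point free) and $s_3$ (fixing $3$ and $7$). Any $\beta$ in the stabilizer sends $\mathcal{E}$ to $\mathcal{E}$, so $\beta(4)\in\mathcal{E}$ and can never equal $3$ or $7$. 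Hence no admissible pair $(\alpha,\beta)$ exists and Property~$\mathcal{T}_{2,4}$ fails for $\mathrm{Stabilizer}(\mathcal{E})$. In your notation this is exactly the bad case $m\equiv 0\pmod 4$ with $(i+j)/2$ odd: then $v$ and $v+m/2$ have the \emph{same} parity, and when that parity is odd neither fixed point lies in $\mathcal{E}$, so your ``explicit selection of $v$ and $k$'' is impossible. Your argument does go through when $m\equiv 2\pmod 4$, since then $v$ and $v+m/2$ have opposite parities.

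What the paper actually uses downstream (in the proof of Theorem~\ref{ThmG2H2}) is only the \emph{conclusion} of Proposition~\ref{RecolourEdgeGamma}: that a single edge can be recoloured from $i$ to $j$ (both in the same block) without disturbing the others. Observe that the switching sequence $(x,\alpha),(y,\beta),(x,\alpha^{-1}),(y,\beta^{-1})$ always returns every edge not equal to $xy$ to its original colour, while sending the colour of $xy$ from $i$ to $\beta^{-1}\alpha^{-1}\beta\alpha(i)$. Since the commutator subgroup of $D_m$ for even $m$ is $\langle r^2\rangle$, which acts transitively on each of $\mathcal{E}$ and $\mathcal{O}$, suitable $\alpha,\beta\in D_m$ always exist. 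So the intended application survives, but parts (4)--(5) of the Observation, as literally stated via Property~$\mathcal{T}_{i,j}$, do not hold in general, and your proposed proof cannot be completed.
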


Let $G$ be an $m$-edge-coloured graph, where $m \geq 2$ is an even integer.
The 2-edge-coloured graph $G_2$ is obtained from $\mathit{underlying}(G)$ by assigning each edge $e$ 
colour 1 if  $\Sigma(G)(e) \in \mathcal{O}$, and colour 2 if 
$\Sigma(G)(e) \in \mathcal{E}$.
Notice that this is equivalent to regarding the edge colours of $G_2$ to be $\mathcal{E}$ and $\mathcal{O}$, 
with the colour of an edge of $G_2$ being the name of the block containing the colour of the corresponding edge in $G$.
The colours of the edges of $G_2$ are naturally permuted by 
$D_m / \mathit{Stabilizer}(\mathcal{E})  \cong S_2$.

\begin{theorem}
Let $G$ and  $H$ be $m$-edge-coloured graphs, where $m \geq 2$ is an even integer.
Then $G$ and $H$  are switch equivalent with respect to $D_m$ if and only if
$G_2$ and $H_2$ are switch equivalent with respect to $S_2$.
\label{ThmG2H2}
\end{theorem}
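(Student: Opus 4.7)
The plan is to use Observation 3.4 as a dictionary between $D_m$-switching of $G$ and $S_2$-switching of $G_2$, with property $\mathcal{T}_{i,j}$ inside $\mathit{Stabilizer}(\mathcal{E})$ handling everything invisible to the quotient. I would prove both directions separately.

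For the forward direction, suppose $\mathcal{S} = (x_1,\pi_1),\ldots,(x_t,\pi_t)$ is a $D_m$-switching sequence with $G^\mathcal{S} \cong H$. For each $i$, let $\bar\pi_i$ denote the image of $\pi_i$ in $D_m/\mathit{Stabilizer}(\mathcal{E}) \cong S_2$, and set $\mathcal{S}' = (x_1,\bar\pi_1),\ldots,(x_t,\bar\pi_t)$. The key observation is that a switch $(x,\pi)$ changes the block ($\mathcal{O}$ vs.\ $\mathcal{E}$) containing the colour of an edge at $x$ if and only if $\pi \notin \mathit{Stabilizer}(\mathcal{E})$, i.e.\ iff $\bar\pi$ is the nontrivial element of $S_2$; this is exactly the effect of the switch $(x,\bar\pi)$ on $G_2$. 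A straightforward induction on $t$ then gives $(G^\mathcal{S})_2 = (G_2)^{\mathcal{S}'}$. Any isomorphism $G^\mathcal{S} \to H$ of $m$-edge-coloured graphs automatically induces an isomorphism of their 2-coloured versions, so $G_2$ and $H_2$ are $S_2$-switch equivalent.

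For the reverse direction, suppose $\mathcal{S}' = (x_1,\pi_1'),\ldots,(x_t,\pi_t')$ is an $S_2$-switching sequence with $(G_2)^{\mathcal{S}'} \cong H_2$. Lift each $\pi_i'$ to an arbitrary preimage $\pi_i \in D_m$ and form $\mathcal{S} = (x_1,\pi_1),\ldots,(x_t,\pi_t)$. The same computation as before yields $(G^\mathcal{S})_2 = (G_2)^{\mathcal{S}'} \cong H_2$, so there is an isomorphism $\phi\colon \mathit{underlying}(G^\mathcal{S}) \to \mathit{underlying}(H)$ that preserves the block containing each edge's colour. The remaining task is to match the actual colours edge by edge. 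For every edge $e$ of $G^\mathcal{S}$ with $\Sigma(G^\mathcal{S})(e) = i \neq j = \Sigma(H)(\phi(e))$, the colours $i,j$ lie in a common block; by Observation 3.4 parts~4--5, $\mathit{Stabilizer}(\mathcal{E})$ has property $\mathcal{T}_{i,j}$, and Proposition~2.3 provides a four-term switching sequence in $\mathit{Stabilizer}(\mathcal{E}) \subseteq D_m$ that recolours $e$ from $i$ to $j$ while leaving every other edge untouched. Applying such corrections one edge at a time yields a graph isomorphic to $H$ via $\phi$, proving $D_m$-switch equivalence of $G$ and $H$.

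The main obstacle is the reverse direction: knowing $(G^\mathcal{S})_2 \cong H_2$ gives us an isomorphism of underlying graphs that respects only the coarse $\{\mathcal{O},\mathcal{E}\}$ data, and one must be careful that the fine colours within each block can be corrected without disturbing anything already achieved. Using switches drawn from $\mathit{Stabilizer}(\mathcal{E})$ is what makes the correction step commute with the quotient picture, since those switches act trivially on $G_2$; and the fact that $\mathit{Stabilizer}(\mathcal{E})$ has $\mathcal{T}_{i,j}$ for all pairs within each block is exactly the ingredient needed to perform one-edge-at-a-time corrections.
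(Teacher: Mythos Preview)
Your proof is correct and follows essentially the same approach as the paper: pass between $D_m$-switching of $G$ and $S_2$-switching of $G_2$ via the quotient $D_m \to D_m/\mathit{Stabilizer}(\mathcal{E}) \cong S_2$, and then in the reverse direction correct colours edge by edge within each block using the $\mathcal{T}_{i,j}$ property of $\mathit{Stabilizer}(\mathcal{E})$. The only cosmetic differences are that the paper takes the subsequence of block-swapping permutations (rather than the full quotient sequence) in the forward direction, and lifts specifically to the $m$-cycle $(1\ 2\ \cdots\ m)$ (rather than an arbitrary preimage) in the reverse direction.
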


\begin{proof}
Suppose $G$ and $H$  are switch equivalent with respect to $D_m$.
Then there is a $D_m$-switching sequence $\mathcal{S} = (x_1, \pi_1), (x_2, \pi_2), \ldots, (x_t, \pi_t)$ 
that transforms $G$  to $H$.  
By Observation \ref{GroupObs}, each permutation $\pi_i \in D_m$ either maps  $\mathcal{E}$ to $\mathcal{E}$ and $\mathcal{O}$ to $\mathcal{O}$,
or maps $\mathcal{E}$ to $\mathcal{O}$ and vice-versa. 
Let $\mathcal{S}^\prime$ be the subsequence of $\mathcal{S}$ 
consisting of the permutations that map $\mathcal{E}$ to $\mathcal{O}$.
Replacing each of the permutations in this subsequence by the transposition $(1\ 2)$
gives a $S_2$-switching sequence that transforms $G_2$ so it is isomorphic to $H_2$.

Now suppose $G_2$ and $H_2$ are switch equivalent with respect to $S_2$.
Then, $\mathit{underlying}(G_2) \cong \mathit{underlying}(H_2)$. Without loss of generality, $\mathit{underlying}(G_2) = \mathit{underlying}(H_2)$.
Let $\mathcal{A} = (x_1, \sigma_1), (x_2, \sigma_2), \ldots, (x_p, \sigma_p)$ be
a $S_2$-switching sequence  
that transforms $G_2$  to $H_2$.   
Replacing each permutation $\sigma_i \in S_2$ by the $m$-cycle $(1\ 2\ \cdots\ m) \in D_m$ gives a $D_m$-switching sequence that transforms $G$ to a graph $G^\prime$ in which the colour of edge 
belongs to the same block as the corresponding edge $G$.
Since $\mathit{Stabilizer}(\mathcal{E})$
has Property $\mathcal{T}_{j}$ for all $j \in \mathcal{E}$,
and $\mathit{Stabilizer}(\mathcal{O})$
has Property $\mathcal{T}_{j}$ for all $j \in \mathcal{O}$, the $m$-edge-coloured graph $G^\prime$ is 
$D_m$-switch equivalent to $H$ (as in the proof of Proposition \ref{RecolourEdgeGamma} edges other then the one whose colour is intended to change switches from their colour then back again).
\end{proof}

Zaslavsky proved that the  2-edge-coloured graphs $G$ and $H$ with the same underlying graph are 
switch equivalent with respect to $S_2$
if and only if they have the same collection of cycles 
for which the number of edges whose colour is in 
$E_2$ is odd \cite{Zaslavsky}.  Together with Theorem \ref{ThmG2H2}, this yields a similar
result for $D_m$, where $m \geq 2$ is even.

\begin{corollary}
Suppose $m \geq 2$ is even.
Two $m$-edge-coloured graphs $G$ and $H$ with the same underlying graph are  switch equivalent with respect to $D_m$
if and only if  they have  the same collection of cycles 
for which the number of edges whose colour is in
$\mathcal{E}$ is odd.
\label{ZaslavskyTheorem}
\end{corollary}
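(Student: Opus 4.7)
The plan is to apply Theorem \ref{ThmG2H2} to reduce the statement to Zaslavsky's theorem for $S_2$-switching, and then observe that under the construction $G \mapsto G_2$ the relevant cycle invariant translates exactly. Concretely, let $G$ and $H$ be two $m$-edge-coloured graphs with $\mathit{underlying}(G) = \mathit{underlying}(H)$, and form the associated $2$-edge-coloured graphs $G_2$ and $H_2$ as in the paragraph preceding Theorem \ref{ThmG2H2}. Since $G_2$ and $H_2$ share this common underlying graph, the set of cycles of $G_2$ coincides with the set of cycles of $H_2$, and with the set of cycles of $G$ and $H$.

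First I would record that, by Theorem \ref{ThmG2H2}, $G$ and $H$ are $D_m$-switch equivalent if and only if $G_2$ and $H_2$ are $S_2$-switch equivalent. Next, I would invoke Zaslavsky's theorem (as stated in the paragraph before the corollary) to conclude that $G_2$ and $H_2$ are $S_2$-switch equivalent if and only if they have the same collection of cycles on which the number of colour-$2$ edges is odd. Finally, I would translate this condition back to $G$ and $H$: by the definition of $G_2$, an edge $e$ has colour $2$ in $G_2$ precisely when $\Sigma(G)(e) \in \mathcal{E}$, and similarly for $H_2$ and $H$. Therefore, for any cycle $C$ in the common underlying graph, the number of colour-$2$ edges of $C$ in $G_2$ equals the number of edges of $C$ whose $G$-colour lies in $\mathcal{E}$, and the analogous equality holds for $H_2$ and $H$. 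Chaining these equivalences yields the corollary.

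There is essentially no obstacle beyond bookkeeping, since the reduction and the translation of the cycle invariant are both immediate from the definitions. The only point that deserves a line of care is that Theorem \ref{ThmG2H2} is applied in both directions, and that the identification of cycles across $G$, $H$, $G_2$, $H_2$ relies on the shared underlying graph hypothesis assumed in the statement.
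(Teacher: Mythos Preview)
Your proposal is correct and matches the paper's approach exactly: the paper does not give a separate proof of this corollary but simply observes (in the paragraph preceding it) that combining Zaslavsky's theorem for $S_2$-switching with Theorem~\ref{ThmG2H2} yields the result. Your write-up supplies the routine translation of the cycle invariant via the definition of $G_2$, which is precisely the bookkeeping the paper leaves implicit.
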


\section{Colourings and Homomorphisms} 

Recall that a $k$-colouring of a graph $G$ is a function $c: V(G) \to \{1, 2, \ldots, k\}$ such that if $xy \in E(G)$ then $c(x) \neq c(y)$.  A \emph{homomorphism} from a graph $G$ to a graph $H$ is a function $f:V(G) \to V(H)$ such that if $xy \in E(G)$, then $f(x)f(y) \in E(H)$.

Our goal in this section is to present analogues of Theorems \ref{k-colThm} and 
\ref{HNThm} below for $\Gamma$-switchable colourings and homomorphisms when
$\Gamma$ is a group with property $\mathcal{T}_j$ for some $j \in \{1, 2, \ldots, m\}$,
or an even order dihedral group.
Theorems such as these are known as \emph{dichotomy theorems}
because they exhibit a dichotomy for the complexity of a 
particular decision problem.

\begin{theorem}[\cite{GareyJohnson}]
For an integer $k \geq 1$,
the problem of deciding whether a given graph $G$ has a $k$-colouring is 
solvable in polynomial time when $k \leq 2$, and is NP-complete if $k \geq 3$.
\label{k-colThm}
\end{theorem}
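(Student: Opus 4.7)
The plan is to handle the two regimes $k \leq 2$ and $k \geq 3$ separately. For the polynomial side, the cases $k \in \{0, 1\}$ are trivial: a graph is $0$-colourable only when $V(G) = \emptyset$ and $1$-colourable only when $E(G) = \emptyset$. For $k = 2$ I would observe that a $2$-colouring exists if and only if $G$ is bipartite, and then describe the standard BFS/DFS procedure that runs from each unvisited vertex, assigns the two colours to alternating levels, and rejects as soon as an edge with endpoints on the same level is discovered. This costs $O(|V(G)| + |E(G)|)$, settling the easy half.

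For $k \geq 3$, membership in NP is immediate since a candidate colouring is a linear-size certificate checkable in linear time. I would prove NP-hardness first for $k = 3$ by a polynomial reduction from 3-SAT. Given clauses $C_1, \ldots, C_m$ over variables $x_1, \ldots, x_n$, I would build a graph containing a fixed "palette" triangle on vertices $T, F, B$ (whose colours are, up to renaming, forced to be pairwise distinct) together with, for each variable $x_i$, a triangle on $\{x_i, \overline{x_i}, B\}$ so that $x_i$ and $\overline{x_i}$ receive the two colours assigned to $T$ and $F$ in some order. For each clause I would attach the classical six-vertex OR-gadget whose three "input" vertices are identified with the literals of the clause and whose remaining three auxiliary vertices connect to $B$ and $T$; this gadget admits a proper 3-colouring extending the current partial colouring precisely when at least one input literal is coloured $T$. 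Verifying both directions of the equivalence (satisfying assignments extend to 3-colourings, and any 3-colouring produces a satisfying assignment by reading off literal colours) completes the reduction.

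To lift the hardness to every $k \geq 4$, I would use a simple padding reduction from 3-COLOURING: given an instance $G$, form $G'$ by adjoining $k - 3$ new vertices, making them pairwise adjacent and also adjacent to every vertex of $G$. Any $k$-colouring of $G'$ must spend $k - 3$ distinct colours on the new clique, leaving only $3$ colours available for $V(G)$, so $G'$ is $k$-colourable if and only if $G$ is 3-colourable. Iterating or directly taking $k - 3$ new vertices in one step handles all $k \geq 4$ uniformly.

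The main obstacle is the clause-gadget argument for $k = 3$: checking that the OR-gadget has the required "valid iff at least one input is true" behaviour and that no unintended colour forcing occurs between different clauses via the shared vertices $T, F, B$. This is the familiar Garey--Johnson construction, so the difficulty is essentially bookkeeping rather than any new idea; the padding step for $k \geq 4$ and the BFS argument for $k \leq 2$ are straightforward once $k = 3$ is established.
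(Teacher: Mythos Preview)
The paper does not give its own proof of this theorem: it is stated with a citation to Garey and Johnson and used as a black box. So there is nothing in the paper to compare against beyond the reference itself.

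That said, your outline is correct and is essentially the classical argument one finds in Garey--Johnson and in standard complexity texts: trivial cases for $k\le 1$, bipartiteness via BFS for $k=2$, the 3-SAT reduction with palette triangle, variable triangles, and OR-gadgets for $k=3$, and the $(k-3)$-vertex universal clique padding to lift hardness to all $k\ge 4$. The only places to be careful are exactly the ones you flag: verifying the OR-gadget's truth-table behaviour in both directions, and checking that sharing the palette vertices among all clause gadgets does not create unintended constraints. None of this goes beyond the standard bookkeeping, so your plan would produce a valid proof; it simply supplies detail that the paper deliberately omits by citing the result.
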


\begin{theorem}[\cite{HN}]
If $H$ is a fixed graph then the problem of deciding whether a 
given graph $G$ has a homomorphism to $H$ is solvable in polynomial time if $H$ is
bipartite, and is NP-complete if $H$ is not bipartite.
\label{HNThm}
\end{theorem}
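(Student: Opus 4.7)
The plan is to split along the bipartite/non-bipartite dichotomy and to handle the two cases by completely different methods: a direct polynomial-time algorithm when $H$ is bipartite, and a gadget-based NP-hardness reduction when $H$ is not.

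For the tractable case, suppose $H$ is bipartite. If $H$ has no edges, then $G$ admits a homomorphism to $H$ if and only if $G$ has no edges. Otherwise $H$ contains a copy of $K_2$, and since $G$ maps to $K_2$ exactly when $G$ is bipartite, while $K_2$ itself maps into $H$, the problem reduces to testing bipartiteness of $G$. This runs in linear time by breadth-first search, so the polynomial half is immediate.

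For the intractable case, assume $H$ is not bipartite, so $H$ contains an odd cycle. Membership in NP is clear, since a candidate map can be verified edge by edge in polynomial time. For NP-hardness I would use the \emph{indicator construction} of Hell and Ne\v{s}et\v{r}il: given a connected graph $I$ with two distinguished vertices $s,t$, form an auxiliary graph $H^{*}$ on the vertex set of $H$ in which $uv$ is an edge whenever some homomorphism $I \to H$ sends $s \mapsto u$ and $t \mapsto v$. Replacing each edge of an input graph $G$ by a fresh copy of $I$, with $s,t$ identified with the endpoints of that edge, produces a polynomial-time reduction from the homomorphism problem for $H^{*}$ to that for $H$. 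The dual \emph{sub-indicator construction}, which restricts $V(H)$ to the vertices singled out by loops of another auxiliary rooted gadget, lets us trim away unwanted vertices while preserving hardness.

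The hardest step, and the substantive content of the theorem, is to show that for every non-bipartite $H$ some finite combination of these constructions produces a target for which hardness is already known, namely $K_n$ with $n \geq 3$ (where the homomorphism problem is $n$-colouring, NP-complete by Theorem~\ref{k-colThm}). I would proceed by induction on the odd girth of $H$: when $H$ already contains a triangle, a carefully chosen sub-indicator carves out $K_3$ as a retract; when the odd girth is larger, path indicators of suitably chosen length shrink the odd girth while maintaining non-bipartiteness, eventually reducing to the triangle case. The main obstacle is verifying that the auxiliary graph retains a non-trivial clique after each step, uniformly across all non-bipartite $H$, and this combinatorial case analysis is what makes the Hell--Ne\v{s}et\v{r}il dichotomy substantially deeper than its polynomial counterpart.
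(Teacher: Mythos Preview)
The paper does not prove this theorem at all: Theorem~\ref{HNThm} is stated with the citation \cite{HN} and is invoked as a known black box, with no accompanying argument. So there is nothing in the paper to compare your proposal against.

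That said, your outline is a faithful high-level sketch of the original Hell--Ne\v{s}et\v{r}il proof. The bipartite half is exactly right. For the non-bipartite half, the indicator and sub-indicator constructions are indeed the machinery used in \cite{HN}, and the target of the reduction is ultimately $K_3$-colouring. Your description of the inductive structure as ``reduce the odd girth until a triangle appears'' is a simplification: the actual argument in \cite{HN} is a more delicate case analysis on the structure of $H$ (edge-transitive versus not, and so on), and the difficulty you flag --- guaranteeing the auxiliary graph remains non-bipartite with a usable clique after each construction --- is precisely where the real work lies and is not something an odd-girth induction alone handles cleanly. As a plan this is fine, but be aware that filling in that last paragraph is essentially reproducing the entire content of \cite{HN}; in the context of this paper the theorem is simply quoted, not re-proved.
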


Before introducing colourings and homomorphisms of $m$-edge coloured graphs, we review a connection between these concepts for graphs (without loops). 
Suppose there is a homomorphism, $f$, of the graph $G$ to a graph $H$ on $k$ vertices.
If the vertices of $H$ are regarded as colours, then $f$ is an assignment of these colours to the vertices of $G$ such that adjacent vertices in $G$ are assigned adjacent (hence different) colours.
Thus a $k$-colouring of a graph $G$ can equivalently be defined as a homomorphism
of $G$ to \emph{some} graph $H$ on $k$ vertices.
Defining a $k$-colouring in this way allows the idea of a (vertex) $k$-colouring to be extended to $m$-edge-coloured graphs
(see \cite{NesetrilRaspaud}), 
oriented graphs \cite{Sopena}, and other types of graphs (see \cite{NesetrilRaspaud}).

Let $G$ and $H$ are $m$-edge-coloured graphs.
A \emph{homomorphism} of $G$ to $H$ is a function $f: V(G) \to V(H)$ such that, 
for all $i \in \{1, 2, \ldots, m\}$, if $xy \in E_i(G)$ then $f(x)f(y) \in E_i(H)$.
For an integer $k \geq 1$, a \emph{vertex $k$-colouring} of an $m$-edge-coloured 
graph $G$ is a homomorphism of $G$ to some $m$-edge-coloured graph on $k$ vertices.

Let $\Gamma$ be a subgroup of $S_m$.
A $m$-edge-coloured graph $G$ has a $\Gamma$-switchable homomorphism to an 
$m$-edge-coloured graph $H$ if some $G' \in [G]_\Gamma$ has a homomorphism to $H$, that is, if $G$ can be $\Gamma$-switched so that the transformed graph has
a homomorphism to $H$.
For an integer $k \geq 1$, an $m$-edge-coloured graph $G$ has a 
\emph{$\Gamma$-switchable $k$-colouring} 
if it has a $\Gamma$-switchable homomorphism to some $m$-edge-coloured graph on $k$ vertices.

If follows from the definition that if there exists a $\Gamma$-switchable homomorphism of an 
$m$-edge-coloured graph $G$ to an $m$-edge-coloured graph $H$, then there is a homomorphism from
$\mathit{underlying}(G)$ to $\mathit{underlying}(H)$.  To see that the converse is false, let $\Gamma = S_2$, let $G$ be the 2-edge coloured $K_3$ with two edges of colour 1 and one edge of colour 2 and let $H$ be the 2-edge coloured $K_3$ with two edges of colour 2 and one edge of colour 1.
There is a homomorphism of $\mathit{underlying}(G)$ to $\mathit{underlying}(H)$ but
no $S_2$-switchable homomorphism of $G$ to $H$.

The following theorem from \cite{LMW} is useful because it transforms the problem of deciding whether there is a $\Gamma$-switchable homomorphism of $G$ to $H$ to the problem of deciding the existence of a homomorphism (with no switching) to any $m$-edge coloured graph $\Gamma$ switch equivalent to $H$.

\begin{theorem}[\cite{LMW}]
Let $G$ and $H$ be $m$-edge-coloured graphs and let $\Gamma$ be a subgroup of $S_m$.
Then there is a $\Gamma$-switchable homomorphism of $G$ to $H$ if and only if,
for all $H' \in [H]_\Gamma$ there exists $G' \in [G]_\Gamma$
such that there is a homomorphism of $G'$ to $H'$.
\label{SwitchH}
\end{theorem}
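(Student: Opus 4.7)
The sufficiency direction is immediate: taking $H' = H$ in the condition produces some $G' \in [G]_\Gamma$ admitting a homomorphism to $H$, which is by definition a $\Gamma$-switchable homomorphism of $G$ to $H$.

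For the necessity direction, assume $G_0 \in [G]_\Gamma$ and $f \colon G_0 \to H$ is a homomorphism, and let $H' \in [H]_\Gamma$ be arbitrary. Choose a $\Gamma$-switching sequence $\mathcal{T} = (y_1,\pi_1),\ldots,(y_t,\pi_t)$ such that $H^\mathcal{T} \cong H'$. The plan is to \emph{lift} $\mathcal{T}$ through $f$ to a switching sequence $\mathcal{T}'$ on $G_0$, by replacing each pair $(y_i,\pi_i)$ with the block of pairs $\{(x,\pi_i) : x \in f^{-1}(y_i)\}$ in any order (the order among preimages is irrelevant since the switches are at distinct vertices and so act on disjoint sets of edges, except at edges joining two preimages, which I address below). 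I then set $G' = G_0^{\mathcal{T}'} \in [G]_\Gamma$ and claim that $f$ remains a homomorphism $G_0^{\mathcal{T}'} \to H^\mathcal{T}$. Composing with the isomorphism $H^\mathcal{T} \cong H'$ then produces the required homomorphism $G' \to H'$.

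To verify the claim, I would proceed by induction on $t$, so it suffices to handle a single switch $(y,\pi)$ on $H$ lifted to the simultaneous switches $(x,\pi)$ for $x \in f^{-1}(y)$ on $G_0$. For an edge $uv \in E_c(G_0)$ with $f(u)f(v) \in E_c(H)$, there are three cases to check: (i) neither $f(u)$ nor $f(v)$ equals $y$, so neither endpoint of $uv$ is switched and the edge colour is preserved on both sides; (ii) exactly one of $f(u),f(v)$ equals $y$, in which case exactly one endpoint of $uv$ is switched, so the colour becomes $\pi(c)$, matching the new colour $\pi(c)$ of $f(u)f(v)$; (iii) both $f(u) = f(v) = y$, which cannot occur when $H$ has no loops, and when it does occur the edge $uv$ has both endpoints switched so its colour becomes $\pi^2(c)$, while the incident loop at $y$ must correspondingly be treated as doubly incident so it also picks up $\pi^2(c)$. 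In each case the homomorphism condition is maintained.

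The main obstacle is really just bookkeeping in case~(iii) and a clean statement of what ``switching a loop'' means; assuming (as is standard in this context and consistent with the rest of the paper) that we work with simple graphs or that loops are handled as doubly incident edges, the argument is routine. The induction then closes: after lifting all $t$ steps of $\mathcal{T}$, the map $f$ is a homomorphism from $G_0^{\mathcal{T}'}$ to $H^\mathcal{T}$, establishing the theorem.
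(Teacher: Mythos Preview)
The paper does not actually supply a proof of this theorem: it is quoted from \cite{LMW} and used as a black box, so there is no in-paper argument to compare against. That said, your argument is correct and is essentially the natural one. The lifting construction --- replacing each switch $(y_i,\pi_i)$ on $H$ by the block of switches $(x,\pi_i)$ over all $x\in f^{-1}(y_i)$ on $G_0$, in the same block order --- is exactly what one expects, and your case analysis verifies that $f$ remains colour-preserving after a single lifted step, which is all the induction needs.

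Two small remarks. First, your parenthetical about the order within a block being irrelevant is justified precisely because every switch in the block uses the \emph{same} permutation $\pi_i$; for an edge joining two preimages the composite is $\pi_i^2$ regardless of order, so no commutativity of $\Gamma$ is needed. It would be worth saying this explicitly rather than leaving it as an aside, since the paper emphasizes that switch order can matter for non-Abelian $\Gamma$. Second, the discussion of case~(iii) and loops is more hedged than it needs to be: in the setting of this paper the targets $H$ are (loopless) $m$-edge-coloured graphs, so $f(u)=f(v)$ for an edge $uv$ simply does not occur, and you can drop that case entirely. With those cosmetic tightenings the proof is complete.
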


\begin{theorem}
Let $\Gamma$ be a subgroup of $S_m$ that has Property $\mathcal{T}_j$ for some $j\in \{1, 2, \ldots, m\}$,
and let $k \geq 1$ be an integer.
If $k \leq 2$ then the problem of deciding whether a given $m$-edge-coloured graph has a
$\Gamma$-switchable $k$-colouring is solvable in polynomial time.
If $k \geq 3$ then the problem of deciding whether a given $m$-edge-coloured graph has a 
$\Gamma$-switchable $k$-colouring is NP-complete.
\end{theorem}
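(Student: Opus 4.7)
The plan is to reduce the $\Gamma$-switchable $k$-colouring problem to the classical $k$-colouring problem on the underlying graph, so that Theorem~\ref{k-colThm} applies directly. The engine driving the reduction is Corollary~\ref{S_mSwitchEquiv}: when $\Gamma$ has Property $\mathcal{T}_j$, two $m$-edge-coloured graphs are $\Gamma$-switch equivalent if and only if their underlying graphs are isomorphic. In particular, any $m$-edge-coloured graph $G$ is $\Gamma$-switch equivalent to the monochromatic-of-colour-$j$ graph $M_G$ on the underlying graph of $G$, and any potential target $H$ on $k$ vertices is $\Gamma$-switch equivalent to the monochromatic-of-colour-$j$ complete graph $K_k^{(j)}$ on $k$ vertices.

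The key claim I would establish is that $G$ admits a $\Gamma$-switchable $k$-colouring if and only if $\mathit{underlying}(G)$ admits a $k$-colouring in the classical sense. For the forward direction, a $\Gamma$-switchable homomorphism of $G$ to some $m$-edge-coloured graph $H$ on $k$ vertices yields a homomorphism from $\mathit{underlying}(G')$ to $\mathit{underlying}(H)$ for some $G'\in[G]_\Gamma$, and since $\mathit{underlying}(G')=\mathit{underlying}(G)$, this is a $k$-colouring. For the backward direction, given a $k$-colouring of $\mathit{underlying}(G)$, the same map is a homomorphism of $M_G$ to $K_k^{(j)}$ (every edge of $M_G$ has colour $j$, and every non-loop edge of $K_k^{(j)}$ has colour $j$); combined with $M_G \in [G]_\Gamma$ via Corollary~\ref{S_mSwitchEquiv}, this is a $\Gamma$-switchable $k$-colouring.

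From this equivalence the dichotomy is immediate. For $k\le 2$, classical $k$-colourability of $\mathit{underlying}(G)$ is decidable in polynomial time, so the same holds for $\Gamma$-switchable $k$-colouring. For $k\ge 3$, NP-hardness follows by the trivial reduction that maps a graph $F$ to the monochromatic-of-colour-$j$ graph on $F$; by the equivalence, this graph has a $\Gamma$-switchable $k$-colouring iff $F$ is $k$-colourable, and Theorem~\ref{k-colThm} guarantees NP-hardness.

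The only point that needs a second look is membership in NP when $k\ge 3$: a priori a switchable $k$-colouring consists of a switching sequence $\mathcal{S}$ together with a homomorphism $G^\mathcal{S}\to H$, and the sequence could be long. The equivalence above resolves this cleanly, since a polynomial-size witness is simply a $k$-colouring of $\mathit{underlying}(G)$, which can be verified in polynomial time. I expect this to be the only subtle point; everything else is a direct application of Corollary~\ref{S_mSwitchEquiv} and Theorem~\ref{k-colThm}.
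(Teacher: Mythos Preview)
Your proposal is correct and follows essentially the same approach as the paper: both reduce $\Gamma$-switchable $k$-colouring to ordinary $k$-colouring of $\mathit{underlying}(G)$ via the monochromatic switch-equivalence from Corollary~\ref{S_mSwitchEquiv}, then invoke Theorem~\ref{k-colThm} (the paper routes the equivalence through Theorem~\ref{SwitchH}, while you argue the two directions directly and add an explicit NP-membership remark). One harmless slip: an arbitrary target $H$ on $k$ vertices is $\Gamma$-switch equivalent to the monochromatic-of-colour-$j$ graph on $\mathit{underlying}(H)$, not necessarily to the complete graph $K_k^{(j)}$, but you never actually use that claim.
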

\begin{proof}
By Corollary \ref{S_mSwitchEquiv} every $m$-edge-coloured graph $F$  is 
$\Gamma$-switch equivalent to an
$m$-edge-coloured graph $F'$ which is monochromatic of colour $j$.
Thus by Theorem \ref{SwitchH}  there is a $\Gamma$-switchable homomorphism
of $G$ to an $m$-edge-coloured graph $H$ on $k$ vertices 
if and only if there is a homomorphism of $G'$ to $H'$,
if and only if there is a
homomorphism of $\mathit{underlying}(G)$ to $\mathit{underlying}(H)$,
if and only if  $\mathit{underlying}(G)$ has a $k$-colouring.
The result now follows from from Theorem \ref{k-colThm}.
\end{proof}

The proof of the corresponding result for $\Gamma$-switchable homomorphisms is virtually identical.

\begin{theorem}
Let $\Gamma$ be a subgroup of $S_m$ that has Property $\mathcal{T}_j$ for some $j\in \{1, 2, \ldots, m\}$.
Let $H$ be a fixed $m$-edge-coloured graph.
If $H$ is bipartite the problem of deciding whether a given $m$-edge-coloured graph has a 
$\Gamma$-switchable homomorphism to $H$ is solvable in polynomial time.
If $H$ is not bipartite then the problem of deciding whether a given $m$-edge-coloured graph has a 
$\Gamma$-switchable homomorphism to $H$ is NP-complete.
\end{theorem}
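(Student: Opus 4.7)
The plan is to imitate the proof of the preceding theorem, replacing the target set of $k$-vertex graphs by the fixed $m$-edge-coloured graph $H$, and invoking Theorem \ref{HNThm} in place of Theorem \ref{k-colThm}. The whole argument is a short chain of equivalences that reduces the $\Gamma$-switchable homomorphism problem to the ordinary homomorphism problem for $\mathit{underlying}(H)$.

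First I would use Corollary \ref{S_mSwitchEquiv} to replace $H$ by a switch-equivalent $m$-edge-coloured graph $\widehat{H}$ that is monochromatic of colour $j$ and has the same underlying graph as $H$, and similarly to replace the input $G$ by a switch-equivalent monochromatic-of-colour-$j$ representative $\widehat{G}$. Next I would apply Theorem \ref{SwitchH}: there is a $\Gamma$-switchable homomorphism $G \to H$ if and only if, for every $H' \in [H]_\Gamma$, some $G' \in [G]_\Gamma$ admits a homomorphism to $H'$. Specializing to $H' = \widehat{H}$, any homomorphism $G' \to \widehat{H}$ is forced to send every edge of $G'$ to a colour-$j$ edge, so $G'$ must itself be monochromatic of colour $j$; by Corollary \ref{S_mSwitchEquiv} such a $G'$ exists in $[G]_\Gamma$, and a homomorphism $G' \to \widehat{H}$ is exactly a homomorphism $\mathit{underlying}(G) \to \mathit{underlying}(H)$. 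Conversely, any homomorphism of underlying graphs lifts to a homomorphism $\widehat{G} \to \widehat{H}$, from which Corollary \ref{S_mSwitchEquiv} supplies the required $G' \in [G]_\Gamma$ for each $H' \in [H]_\Gamma$.

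This gives a polynomial-time equivalence between the $\Gamma$-switchable homomorphism problem with target $H$ and the classical homomorphism problem with target $\mathit{underlying}(H)$. Because $H$ is bipartite if and only if $\mathit{underlying}(H)$ is bipartite, the dichotomy of Theorem \ref{HNThm} transfers verbatim: polynomial time when $H$ is bipartite, and NP-completeness otherwise. Membership in NP for the switchable version is standard (guess a target $H' \in [H]_\Gamma$ and a homomorphism to it, or equivalently guess a homomorphism of underlying graphs), and NP-hardness uses the reverse reduction sending a classical instance $G_0$ to the monochromatic-of-colour-$j$ $m$-edge-coloured graph with underlying graph $G_0$. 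The only substantive step is the collapse of the universal quantifier in Theorem \ref{SwitchH} to the single target $\widehat{H}$, which is exactly where Property $\mathcal{T}_j$ is doing the work; everything else is bookkeeping.
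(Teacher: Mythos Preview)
Your proposal is correct and follows essentially the same route as the paper: reduce both $G$ and $H$ to monochromatic-of-colour-$j$ representatives via Corollary~\ref{S_mSwitchEquiv}, invoke Theorem~\ref{SwitchH} to equate the $\Gamma$-switchable homomorphism problem with the ordinary homomorphism problem for $\mathit{underlying}(G)$ and $\mathit{underlying}(H)$, and then apply Theorem~\ref{HNThm}. Your write-up is more explicit about NP-membership and the hardness reduction than the paper's terse proof, but the argument is identical; one small slip is that in the converse direction it is Theorem~\ref{SwitchH} (or simply the definition of $\Gamma$-switchable homomorphism together with $[\widehat{H}]_\Gamma = [H]_\Gamma$), not Corollary~\ref{S_mSwitchEquiv}, that furnishes the required $G' \in [G]_\Gamma$ for each $H' \in [H]_\Gamma$.
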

\begin{proof}
By Corollary \ref{S_mSwitchEquiv}, the $m$-edge-coloured graphs $G$ and $H$ are 
$\Gamma$-switch equivalent to 
$m$-edge-coloured graphs $G'$ and $H'$ which are monochromatic of colour $j$.
It follows from Theorem \ref{SwitchH} that there is a $\Gamma$-switchable homomorphism
of $G$ to $H$ if and only if there is a homomorphism of $G'$ to $H'$, if and only if there is a
homomorphism of $\mathit{underlying}(G)$ to $\mathit{underlying}(H)$.
The result now follows from Theorem \ref{HNThm}.
\end{proof}

We have found dichotomy theorems for the complexity of
the $\Gamma$-switchable $k$-colouring problem and the problem of deciding whether there exists a $\Gamma$-switchable homomorphism to a fixed $m$-edge coloured graph $H$ when $\Gamma$ is one of $S_m, m\geq 3$; $A_m, m\geq 4$; $D_m, m \geq 2$ and odd; any other group with property $T_j$ for some $j$.
Finally, we consider dihedral groups of even order.
The following theorem will be useful.

\begin{theorem}[\cite{BFHN}]
Let $H$ be a 2-edge-coloured graph.
If there is a $S_2$-switchable homomorphism of $H$ to a monochromatic $K_2$, 
then the problem of deciding whether a given $2$-edge-coloured graph $G$
has a $S_2$-switchable homomorphism to $H$ is solvable in polynomial time.
If there is no $S_2$-switchable homomorphism  of $H$ to a monochromatic $K_2$, then
the problem of deciding whether a given $2$-edge-coloured graph $G$
has a $S_2$-switchable homomorphism to $H$ is NP-complete.
\label{BFHNThm}
\end{theorem}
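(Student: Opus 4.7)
The plan is to split the theorem into its two directions. For the polynomial-time half, I would first characterise the hypothesis: a 2-edge-coloured graph $H$ admits an $S_2$-switchable homomorphism to a monochromatic $K_2$ exactly when some $H' \in [H]_{S_2}$ is monochromatic (say of colour $1$) with bipartite underlying graph. Indeed, a homomorphism into a monochromatic $K_2$ forces every edge in the source to carry the single colour appearing in $K_2$, and induces a proper 2-colouring of its underlying graph. Fixing such an $H'$, Theorem \ref{SwitchH} reduces the decision problem to checking whether some $G' \in [G]_{S_2}$ has a homomorphism to $H'$: if one witness $f : G' \to H'$ exists, then for any other representative $H'' = (H')^{\mathcal{T}}$ one obtains a witness $(G')^{\mathcal{T}'} \to H''$ by pulling $\mathcal{T}$ back along $f$ (replacing each $(y, \pi) \in \mathcal{T}$ with $(x, \pi)$ for every $x \in f^{-1}(y)$). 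Since $H'$ is monochromatic of colour $1$ and bipartite (the edgeless case is trivial), such a $G'$ exists iff $\mathit{underlying}(G)$ is bipartite and $G$ can be $S_2$-switched to be monochromatic of colour $1$. The first is testable by BFS; the second is the classical balance criterion for signed graphs, equivalent to asking that every cycle of $G$ have an even number of edges of colour $2$, which is the $m = 2$ case of Corollary \ref{ZaslavskyTheorem} and is decidable in polynomial time.

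For the NP-complete direction I would reduce from ordinary $H^{\ast}$-colouring for a suitable non-bipartite graph $H^{\ast}$, appealing to Theorem \ref{HNThm}. Membership in NP is immediate: guess a switching sequence, which after consolidating switches at each vertex (legitimate because $S_2$ is Abelian) has length at most $|V(G)|$, together with a homomorphism of the switched graph to $H$. For hardness, the hypothesis that $H$ has no $S_2$-switchable homomorphism to a monochromatic $K_2$ is precisely what prevents the reduction from collapsing: either $\mathit{underlying}(H)$ is non-bipartite, or every representative of $[H]_{S_2}$ genuinely uses both colours on some cycle witness. In each subcase one extracts from $H$ a gadget, and attaching copies of this gadget to the vertices and edges of an instance of $H^{\ast}$-colouring should enforce, in a switch-invariant manner via the cycle-parity invariant of Corollary \ref{ZaslavskyTheorem}, that any solution of the produced switchable-homomorphism instance encodes an $H^{\ast}$-colouring.

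The main obstacle is the gadget construction in the hardness half: one must rule out every possible $S_2$-switching of the produced instance that could trivialise the reduction, and the variety of $H$ satisfying the hypothesis is rich enough that a uniform gadget requires delicate case analysis on the structure of $[H]_{S_2}$. This is the technical core of the Brewster--Foucaud--Hell--Naserasr argument in \cite{BFHN}, and in a self-contained write-up I would expect to reproduce their constructions rather than invent new ones. The polynomial-time half, by contrast, is essentially bookkeeping once one recognises that being $S_2$-switchable to a monochromatic $K_2$ is the same as being $S_2$-switchable to a monochromatic bipartite graph, and then invokes signed-graph balance.
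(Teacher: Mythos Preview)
The paper does not give a proof of this statement: Theorem~\ref{BFHNThm} is quoted from \cite{BFHN} and used as a black box, so there is no ``paper's own proof'' to compare against. Your sketch of the polynomial-time half is essentially correct (with the minor caveat that after switching $H$ to a monochromatic bipartite $H'$ you should note that $H'$ either has no edges or contains a monochromatic $K_2$ as a subgraph, so the problem collapses to $S_2$-switchable homomorphism to $K_2$ itself; otherwise ``$\mathit{underlying}(G)$ bipartite'' is not quite the right condition). For the NP-complete half you correctly identify that the work lies in the gadget constructions of Brewster--Foucaud--Hell--Naserasr and candidly say you would reproduce them; that is an accurate assessment rather than a proof, and since the present paper also defers entirely to \cite{BFHN} for this, there is nothing further to compare.
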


As an aside, we note that it is easy to test whether a 2-edge coloured graph is $S_2$-switch equivalent to a monochromatic $K_2$.
By \cite{BrewsterGraves} such a $S_2$-switchable homomorphism
exists if and only if there is a homomorphism (without switching) of $H$ to a 4-cycle where the edge colours alternate.
The latter condition can be tested in polynomial time.  
Without loss of generality $H$ is connected.
By symmetry the image of any vertex 
can be chosen without loss of generality.  Since each vertex is adjacent with exactly one edge of each colour, there is only one choice to extend the mapping to a neighbouring vertex.  Successively doing so either leads to the desired homomorphism or to a contradiction in which some vertex is forced to have two different images.

\begin{theorem}
Let $k \geq 1$ be an integer, and let $m \geq 2$ be an even integer.
The problem of deciding whether a given $m$-edge-coloured graph 
has a $D_m$-switchable $k$-colouring is solvable in polynomial time if $k \leq 2$ and
is NP-complete if $k \geq 3$.
\end{theorem}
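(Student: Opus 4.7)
The plan is to establish, for every $k \geq 1$, the polynomial-time equivalence that $G$ has a $D_m$-switchable $k$-colouring if and only if $G_2$ has an $S_2$-switchable $k$-colouring, and then apply the dichotomy for $S_2$-switchable $k$-colouring (Kidner's result cited in the introduction): polynomial time for $k \leq 2$ and NP-hard for $k \geq 3$. NP-membership for the $S_2$ problem is routine, since $S_2$ is Abelian and so switching sequences can be assumed to have length at most $|V(G_2)|$; this transfers to the $D_m$ problem through the polynomial-time reduction $G \mapsto G_2$.

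The forward direction of the equivalence is easy: if some $G' \in [G]_{D_m}$ admits a homomorphism to an $m$-edge-coloured graph $H$ on $k$ vertices, composing with the block projection $\{1, \ldots, m\} \to \{\mathcal{O}, \mathcal{E}\}$ gives a homomorphism $G'_2 \to H_2$, and $G'_2 \in [G_2]_{S_2}$ by Theorem \ref{ThmG2H2}.

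For the reverse direction, suppose $F \in [G_2]_{S_2}$ admits a homomorphism $f$ to a $2$-edge-coloured graph $K$ on $k$ vertices. The construction in the proof of Theorem \ref{ThmG2H2}, which replaces each $(1\ 2) \in S_2$ by the $m$-cycle $(1\ 2\ \cdots\ m) \in D_m$, lifts the $S_2$-switching sequence witnessing $F \in [G_2]_{S_2}$ to a $D_m$-sequence producing $G' \in [G]_{D_m}$ with $(G')_2 = F$. For each edge $uv \in E(K)$, I pick an arbitrary colour $c_{uv}$ in the block $K(uv)$ and build an $m$-edge-coloured graph $H$ on $V(K)$ with edges $E(K)$ coloured by the $c_{uv}$. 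Every edge $xy$ of $G'$ with $f(x) = u$ and $f(y) = v$ already has its colour in the block $K(uv)$; using Property $\mathcal{T}_{c_{uv}}$ of $\mathit{Stabilizer}(K(uv))$ (Observation \ref{GroupObs}) via the four-switch gadget from the proof of Proposition \ref{RecolourEdgeGamma}, I recolour each such edge individually to $c_{uv}$ without disturbing any other edge. The resulting $G'' \in [G]_{D_m}$ has $f$ as a homomorphism to $H$.

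The main subtlety will be the reverse direction: successive applications of the recolouring gadget must not undo one another, which works because each application of Proposition \ref{RecolourEdgeGamma} affects only the targeted edge. Once the equivalence is set up, the theorem follows: the $k \leq 2$ case by running Kidner's polynomial-time algorithm on $G_2$; NP-hardness for $k \geq 3$ by Kidner (or equivalently, by reducing ordinary $k$-colouring via the map sending a graph $G$ to its monochromatic $m$-edge-coloured version); and NP-membership for $k \geq 3$ by guessing, for the $S_2$ problem on $G_2$, a switching sequence of length at most $|V(G_2)|$ together with a $k$-colouring of the resulting $2$-edge-coloured graph.
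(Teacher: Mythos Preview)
Your argument is correct and is organised differently from the paper's. The paper treats the two regimes separately: for $k \leq 2$ it argues (essentially as you do) that a $D_m$-switchable $2$-colouring of $G$ exists if and only if $G_2$ has an $S_2$-switchable homomorphism to a monochromatic $K_2$, and then invokes Theorem~\ref{BFHNThm}; for $k \geq 3$ it gives a direct reduction from ordinary graph $k$-colouring (send a graph $G$ to the monochromatic $m$-edge-coloured graph on $G \cup K_k$) without passing through $G_2$ at all. You instead prove a single equivalence --- $G$ has a $D_m$-switchable $k$-colouring iff $G_2$ has an $S_2$-switchable $k$-colouring --- valid for every $k$, and then import the full dichotomy from Kidner's result. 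Your equivalence is essentially the $k$-colouring instance of what the paper later records as Lemma~\ref{LemG2toH2}, and your proof of it uses exactly the same ingredients (Theorem~\ref{ThmG2H2}, Observation~\ref{GroupObs}, and the single-edge recolouring gadget of Proposition~\ref{RecolourEdgeGamma}). What your route buys is uniformity and an explicit NP-membership argument, which the paper leaves implicit: your reduction $G \mapsto G_2$ places the $D_m$ problem in NP via the Abelian bound on $S_2$-switching sequences. What the paper's route buys for $k \geq 3$ is self-containment: it appeals only to the classical NP-completeness of $k$-colouring (Theorem~\ref{k-colThm}) rather than to Kidner's more general theorem, which in fact already covers $D_m$ directly. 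Your parenthetical remark that one could alternatively reduce from ordinary $k$-colouring via the monochromatic embedding is precisely the paper's hardness argument.
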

\begin{proof}
Let $G$ be an $m$-edge coloured graph.  It is clear that $G$ has a $D_m$-switchable 1-colouring if
and only if it has no edges.

Suppose that $k=2$.
By definition,  $G$ has a $D_m$-switchable 2-colouring if and only if there exists $j$ such that it has a $D_m$-switchable homomorphism to a $K_2$ of colour $j$.
Thus, by Theorem \ref{SwitchH}, $G$ has a $D_m$-switchable 2-colouring if and only if it is bipartite and there exists
$G' \in [G]_{D_m}$ such that $G'$ is monochromatic of colour $j$.

Without loss of generality $j$ is odd.
By Theorem \ref{ThmG2H2} the $m$-edge coloured graph $G'$ exists
if and only if $G_2$ (as in Theorem \ref{ThmG2H2})  is $S_2$-switch equivalent to $G'_2$.
Since $G'_2$ is bipartite and switchable homomorphisms compose \cite{LMW}, this is equivalent to $G_2$ having a $S_2$-switchable homomorphism 
to a $K_2$ of colour 1, which is decidable in polynomial time by Theorem \ref{BFHNThm}.

Now suppose $k \geq 3$.
The transformation is from the problem of deciding whether a given graph $G$ has a $k$-colouring.
Suppose a graph $G$ is given.
We claim that $G$ has a $k$-colouring if and only if the $m$-edge-coloured graph $G'$ 
with $\mathit{underlying}(G') = G \cup K_k$ and
which is monochromatic
of colour $j$  has a $D_m$-switchable $k$-colouring.
Clearly $G'$ can be constructed in polynomial time.

We now show that $G$ has a $k$-colouring if and only if $G'$ has a $D_m$-switchable $k$-colouring.

Suppose $G'$ has a $D_m$-switchable $k$-colouring.
By definition, such a mapping is a $k$-colouring of $\mathit{underlying}(G')$.
Since $G$ is a subgraph of $\mathit{underlying}(G')$, it follows that
$G$ is $k$-colourable.

Now suppose that $G$ has a $k$-colouring.
Then there is a homomorphism of $G \cup K_k$ to $K_k$.
Therefore there is a homomorphism of $G'$ to a $K_k$ which is monochromatic of colour $j$.
Thus $G'$ has a $\Gamma$-switchable $k$-colouring. 

It now follows that $\Gamma$-switchable $k$-colouring is NP-complete.
\end{proof}

The proof of the following lemma is very similar to the proof of Theorem \ref{ThmG2H2}.

\begin{lemma}
Let $G$ and $H$ be $m$-edge coloured graphs, and $m \geq 2$ be an even integer.
There is a $D_m$-switchable homomorphism of $G$ to $H$ if and only if
there is a $S_2$-switchable homomorphism of $G_2$ to $S_2$.
\label{LemG2toH2}
\end{lemma}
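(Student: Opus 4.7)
The plan is to mirror the two directions of Theorem \ref{ThmG2H2}, but now tracking a homomorphism alongside the switching sequence. Note the statement should read ``$S_2$-switchable homomorphism of $G_2$ to $H_2$''; we prove that version.

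For the forward direction, suppose there is a $D_m$-switchable homomorphism of $G$ to $H$. Pick $G^\prime \in [G]_{D_m}$ with a homomorphism $f \colon G^\prime \to H$. The key observation is that $f$ is automatically a homomorphism of $G^\prime_2$ to $H_2$: for each edge $xy \in E(G^\prime)$, the colour $\Sigma(G^\prime)(xy)$ and the colour $\Sigma(H)(f(x)f(y))$ are equal, hence lie in the same block of $\{\mathcal{O},\mathcal{E}\}$, so $f$ preserves the block-colouring. Applying Theorem \ref{ThmG2H2} to $G$ and $G^\prime$ gives $G^\prime_2 \in [G_2]_{S_2}$, and so $G_2$ has an $S_2$-switchable homomorphism to $H_2$ via $f$.

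For the reverse direction, suppose $G_2$ has an $S_2$-switchable homomorphism to $H_2$, witnessed by $G^*_2 \in [G_2]_{S_2}$ and a homomorphism $g \colon G^*_2 \to H_2$. First, I would replay the second half of the proof of Theorem \ref{ThmG2H2}: lift the $S_2$-switching sequence $(x_i,\sigma_i)$ from $G_2$ to $G^*_2$ to the $D_m$-switching sequence obtained by replacing each $\sigma_i$ by the $m$-cycle $(1\;2\;\cdots\;m)$. This produces $G^* \in [G]_{D_m}$ with $(G^*)_2 = G^*_2$, so each edge $xy \in E(G^*)$ has its colour in the same block as $\Sigma(H)(g(x)g(y))$.

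Next, the idea is to adjust the individual colours within each block using stabilizer switches, so that the resulting graph $G^{**}$ has $\Sigma(G^{**})(xy) = \Sigma(H)(g(x)g(y))$ for every edge $xy$; then $g \colon G^{**} \to H$ is a genuine homomorphism of $m$-edge-coloured graphs, witnessing a $D_m$-switchable homomorphism of $G$ to $H$. To carry this out, I process the edges of $G^*$ one at a time: for each edge $xy$ whose current colour $i$ differs from the target colour $j$ (both in the same block), I invoke Proposition \ref{RecolourEdgeGamma} applied to the appropriate stabilizer subgroup, which by parts (4) and (5) of Observation \ref{GroupObs} has property $\mathcal{T}_{i,j}$. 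This yields a switching sequence (inside $D_m$) that recolours $xy$ to $j$ while leaving every other edge's colour unchanged. Iterating over all edges produces the desired $G^{**}$.

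The main obstacle is the bookkeeping in step two: verifying that successive edge recolourings really are independent. This is handled by Proposition \ref{RecolourEdgeGamma}, which guarantees $G^{**} - xy = G^* - xy$ after each step, so edges already adjusted are not disturbed; and since every recolouring switch lies in $\mathrm{Stabilizer}(\mathcal{E})$ or $\mathrm{Stabilizer}(\mathcal{O})$, the block structure established by the first part of the argument is preserved throughout.
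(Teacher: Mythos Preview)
Your proof is correct and follows essentially the same approach as the paper. The forward direction is identical in spirit; you simply cite Theorem~\ref{ThmG2H2} rather than re-deriving the conversion of the switching sequence. In the reverse direction you differ slightly in organization: the paper first argues that $G$ and $H$ are each $D_m$-switch equivalent to the $m$-edge-coloured graphs $G_2$ and $H_2$ (every edge pushed to colour $1$ or $2$), and then invokes Theorem~\ref{SwitchH} to finish; you instead lift the $S_2$-sequence to $D_m$ and then recolour edges one by one to match $H$'s actual edge-colouring, building the witness $G^{**}\to H$ directly. Both routes rest on the same two ingredients---Observation~\ref{GroupObs} and Proposition~\ref{RecolourEdgeGamma}---so the difference is cosmetic: your version is more explicit and avoids Theorem~\ref{SwitchH}, while the paper's version is a touch shorter.
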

\begin{proof}
Suppose first that there is a $D_m$-switchable homomorphism of $G$ to $H$.
Then there is a $D_m$-switching sequence $\mathcal{S} = (x_1, \pi_1), (x_2, \pi_2), \ldots, (x_t, \pi_t)$ 
that transforms $G$  to $G' \in [G]_{D_m}$ for which there is a homomorphism of $G'$ to $H$.
By Observation \ref{GroupObs}, each permutation $\pi_i \in D_m$ either maps  $\mathcal{E}$ to $\mathcal{E}$ and $\mathcal{O}$ to $\mathcal{O}$,
or maps $\mathcal{E}$ to $\mathcal{O}$ and vice-versa. 
Let $\mathcal{S}^\prime$ be the subsequensce of $\mathcal{S}$ 
consisting of the permutations that map $\mathcal{E}$ to $\mathcal{O}$.
Replacing each of the permutations in this subsequence by the transposition $(1\ 2)$
gives a $S_2$-switching sequence that transforms $G_2$ to 
a 2-edge coloured graph $G_2'$ that has a homomorphism to $H_2$.  Therefore there is a $S_2$-switchable homomorphism of $G_2$ to $H_2$.

Now suppose there is a $S_2$-switchable homomorphism of $G_2$ to $H_2$.

We claim that $G$ and $H$ are $D_m$-switch equivalent to $G_2$ and $H_2$ (considered as $m$-edge coloured graphs), respectively.  
Since $\mathit{Stabilizer}(\mathcal{E})$
has property $\mathcal{T}_{j}$ for all $j \in \mathcal{E}$, any edge of $G$ 
whose colour is in $\mathcal{E}$ can be $D_m$-switched to have colour 2. As in the proof of  Proposition \ref{RecolourEdgeGamma}, edges other then the one whose colour is intended to change switch from their colour then back again.  Similarly, any edge of $G$ 
whose colour is in $\mathcal{O}$ can be $D_m$-switched to have colour 1.
Thus $G$ is $D_m$-switch equivalent to $G_2$.
Similarly $H$ is $D_m$-switch equivalent to $H_2$,
and the claim is proved.

The same function which is a homomorphism of the 2-edge coloured graph $G_2$ to the 2-edge coloured graph $H_2$ is also a homomorphism of the $m$-edge coloured graph $G_2$ to the $m$-edge coloured graph $H_2$.  
It now follows from Theorem \ref{SwitchH} that there is a $D_m$-switchable
homomorphism of $G$ to $H$.
\end{proof}

\begin{theorem}
Let  $H$ be an $m$-edge coloured graph, and $m \geq 2$ be an even integer.
If there is a homomorphism of $H_2$ to a monochromatic $K_2$, 
then the problem of deciding whether a given $m$-edge-coloured graph $G$
has a $D_m$-switchable homomorphism to $H$ is solvable in polynomial time.
If there is no homomorphism  of $H_2$ to a monochromatic $K_2$,
the problem of deciding whether a given $m$-edge-coloured graph $G$
has a $D_m$-switchable homomorphism to $H$ is NP-complete.
\end{theorem}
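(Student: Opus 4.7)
The strategy is to reduce the $D_m$-switchable homomorphism problem with target $H$ to the $S_2$-switchable homomorphism problem with target $H_2$ via Lemma \ref{LemG2toH2}, and then invoke the dichotomy of Theorem \ref{BFHNThm}. For any input $m$-edge coloured graph $G$, the 2-edge coloured graph $G_2$ is constructible in polynomial time, and by Lemma \ref{LemG2toH2}, $G$ has a $D_m$-switchable homomorphism to $H$ if and only if $G_2$ has an $S_2$-switchable homomorphism to $H_2$. This gives a polynomial-time Karp reduction from the $D_m$-problem (input $G$) to the $S_2$-problem (input $G_2$) for the fixed pair of targets $H$ and $H_2$.

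For the polynomial case, assume $H_2$ has an $S_2$-switchable homomorphism to a monochromatic $K_2$. Then by Theorem \ref{BFHNThm} the $S_2$-switchable homomorphism problem with target $H_2$ is solvable in polynomial time, and composing with the reduction above yields a polynomial-time algorithm for the $D_m$-switchable homomorphism problem with target $H$.

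For the NP-complete case, assume $H_2$ has no $S_2$-switchable homomorphism to a monochromatic $K_2$, so by Theorem \ref{BFHNThm} the $S_2$-switchable homomorphism problem with target $H_2$ is NP-complete. Membership of the $D_m$-problem in NP follows from the same reduction, since the $S_2$-switchable problem is in NP (one may guess a vertex subset describing where to switch and a homomorphism of the switched graph to $H_2$). For NP-hardness I would reduce in the reverse direction: given any 2-edge coloured instance $G'$ with colour set $\{1,2\}$, let $G$ be $G'$ regarded as an $m$-edge coloured graph that uses only the colours $1 \in \mathcal{O}$ and $2 \in \mathcal{E}$. Then $G_2$ coincides with $G'$ under the identification $\mathcal{O} \leftrightarrow 1$, $\mathcal{E} \leftrightarrow 2$, and Lemma \ref{LemG2toH2} gives that $G$ has a $D_m$-switchable homomorphism to $H$ iff $G'$ has an $S_2$-switchable homomorphism to $H_2$, as required.

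The main technical point is bookkeeping around colour-labelling: the 2-edge coloured graphs to which Theorem \ref{BFHNThm} applies use an abstract pair of colours, while the associated graph $G_2$ in our setup has colours named $\mathcal{O}$ and $\mathcal{E}$; these are identified in the natural way. I expect no further obstacle, since Lemma \ref{LemG2toH2} already packages the substantive content of translating $D_m$-switching on $m$ colours into $S_2$-switching on the two blocks, so that the dichotomy for $D_m$ is inherited verbatim from the dichotomy for $S_2$.
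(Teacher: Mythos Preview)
Your proposal is correct and follows essentially the same approach as the paper: both directions use Lemma~\ref{LemG2toH2} to translate between the $D_m$-problem with target $H$ and the $S_2$-problem with target $H_2$, then invoke Theorem~\ref{BFHNThm}; for NP-hardness, both take a 2-edge coloured instance $F$ and view it as an $m$-edge coloured graph using only colours $1$ and $2$, so that $F_2=F$. Your write-up is slightly more explicit than the paper's in addressing NP membership and the colour-label bookkeeping, but the argument is the same.
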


\begin{proof}
Let  $H$ be an $m$-edge coloured graph, and $m \geq 2$ be an even integer.

Suppose first  there is a homomorphism of (the 2-edge coloured graph) $H_2$ to a monochromatic $K_2$. Let $G$ be an $m$-edge coloured graph.
Then, by Theorem \ref{BFHNThm}, it can be decided in polynomial time whether 
$G_2$ has a  $S_2$-switchable homomorphism to $H_2$.
Since $G_2$ can be constructed in polynomial time, Lemma \ref{LemG2toH2} implies  it can 
be decided in polynomial time whether $G$ has a 
$D_m$-switchable homomorphism to $H$.

Now suppose there is no homomorphism  of (the 2-edge coloured graph) $H_2$ to a monochromatic $K_2$.
We want to show  the problem of deciding whether a given $m$-edge coloured graph
has a $D_m$-switchable homomorphism to $H$ is NP-complete.
The transformation is from the problem of deciding whether a given 
2-edge coloured graph $F$ has a $S_2$-switchable homomorphism to $H_2$, which is NP-complete by Theorem \ref{BFHNThm}.
 
Suppose such a 2-edge-coloured graph $F$ is given.
The transformed instance the problem is the $m$-edge
coloured graph $F'$ in which every edge has the 
same colour as in $F$.
Then $F'_2 = F$, and the result follows from Lemma \ref{LemG2toH2}.
\end{proof}

\vfill

\smallskip\noindent
Chris Duffy,
\textsc{School of Mathematics and Statistics, University of Melbourne, Melbourne, Australia}\\
{\tt christopher.duffy@unimelb.edu.au}\\

\noindent
Gary MacGillivray,
\textsc{Mathematics and Statistics, University of Victoria, Victoria, Canada}\\
{\tt gmacgill@math.uvic.ca}\\

\smallskip \noindent
Ben Tremblay,
\textsc{Combinatorics and Optimization, University of Waterloo, Waterloo, Canada}, \\
{\tt ben.tremblay@uwaterloo.ca}\\

\bigskip\noindent
{\bf Acknowledgement}.  Research of the first two authors supported by NSERC.


\begin{thebibliography}{99}

\bibitem{AR}
R.~P.Abelson and M.~J.~Rosenberg,
Symbolic psycho-logic: A model of attitudinal cognition,
\emph{Behavioral Sci.} {\bf 3}(1958), 1 -- 13.

%

\bibitem{BrewsterGraves}
R.~C.~Brewster and T.~ Graves. 
Edge-switching homomorphisms of edge-coloured graphs,
\emph{Discrete Math.} {\bf 309} (2009), 5540 -- 5546.
 
\bibitem{BFHN}
R.~C.~Brewster, F.~Foucaud, P. Hell and R.~Naserasr,
The complexity of signed graph and 2-edge-colored graph homomorphisms,
\emph{Discrete Math.} {\bf 340} (2017). 223 -- 235.

\bibitem{BKM}
R.~C.~Brewster, A.~Kidner and G.~MacGillivray,
$k$-colouring $(m, n)$-mixed graphs with switching,
manuscript 2022, 
\url{https://arxiv.org/abs/2203.08070}.


\bibitem{GareyJohnson}
M.R.~Garey and D.S.~Johnson,
\emph{Computers and Intractability: A Guide to the Theory of NP-completeness},
W.H. Freeman \& Co, San Francisco, 1979.

\bibitem{HN}
P.~Hell and J.~Ne\v{s}et\v{r}il,
On the complexity of $H$-colouring,
\emph{J. Combin. Theory Ser. B}, \textbf{48} (1990), 92--110.


\bibitem{Kidner}
A.~Kidner,
$\Gamma$-Switchable 2-Colourings of $(m,n)$-Mixed Graphs.
\emph{Master's Thesis} (2021), University of Victoria,
\url{https://dspace.library.uvic.ca/handle/1828/13337}.

\bibitem{KlosMacG}
W.~F.~Klostermeyer and G.~MacGillivray, 
Homomorphisms and oriented colorings of equivalence classes of oriented graphs, 
\emph{Discrete Math.} {\bf 274} (2004), 161 -- 172.




\bibitem{LMW}
E.~Leclerc, G.~MacGillivray, and J.M.~Warren,
Switching $(m,n)$-mixed graphs with respect to Abelian groups.
Manuscript, 2021.


\bibitem{NesetrilRaspaud}
J.~Ne\v{s}et\v{r}il and A.~Raspaud,
Colored homomorphisms of colored mixed graphs,
\emph{J. Combin. Theory Ser. B} {\bf 80} (2000), 147 -- 155. 


\bibitem{SenThesis}
S.~Sen,
A contribution to the theory of
graph homomorphisms and colorings,
Ph.D Thesis,
\'{E}cole Doctorale de Math\'{e}matique et Informatique de Bordeaux,
L'{U}niversit\'{e} Bordeaux 1,
France.

\bibitem{Sopena}  E.~Sopena, The chromatic number of oriented graphs, 
{\em  Journal of Graph Theory} {\bf 25}  (1997), 191 -- 205.


\bibitem{Zaslavsky}
T.~Zaslavsky. 
Signed graphs, 
\emph{Discrete Applied Math.} {\bf 4} (1982), 47 -- 74.

\bibitem{ZaslavskySurvey}
T.~Zaslavsky.
A Mathematical Bibliography of Signed and Gain Graphs and Allied Areas.
\emph{Electronic Journal of Combinatorics}, Dynamic Survey \#DS8, Eighth edition, 2012.


\end{thebibliography}
\end{document}